\newcommand{\bpf}[1][Proof]{{\noindent {\sc #1: }}}
\newcommand{\epf}{{{\hfill $\Box$ \smallskip}}}
\newcommand{\ONE}{{\mathbf{1}}}
\newcommand{\Pp}{\mathsf{P}}
\newcommand{\E}{\mathsf{E}}
\newcommand{\R}{{\mathbb R}}
\def\cI{\mathcal{I}}
\def\cF{\mathcal{F}}
\def\cC{\mathcal{C}}
\def\cV{\mathcal{V}}
\newcommand{\tauy}{\bar\tau}
\newcommand{\ttau}{\bar{\bar\tau}}
\newcommand{\eps}{{\varepsilon}}
\newcommand{\e}{\varepsilon}
\newtheorem{theorem}{Theorem}[section]
\newtheorem{remark}{Remark}[section]
\newtheorem{lemma}{Lemma}[section]
\newtheorem{proposition}{Proposition}[section]
\numberwithin{equation}{section}
\renewenvironment{proof}[1][Proof]{
{\noindent {\sc #1: }}
}{
{{\hfill $\Box$ \smallskip}}
}
\let\orgdescriptionlabel\descriptionlabel
\renewcommand*{\descriptionlabel}[1]{%
  \let\orglabel\label
  \let\label\@gobble
  \phantomsection
  \edef\@currentlabel{#1}%
  \let\label\orglabel
  \orgdescriptionlabel{#1}%
}
\begin{document}

\title{Tails of exit times from unstable equilibria on the line}
\author{Yuri Bakhtin, Zsolt Pajor-Gyulai}
\affil{Courant Institute of Mathematical Sciences\\
New York University\\
New York, NY, USA\\
}
\maketitle
\begin{abstract}
For a one-dimensional smooth vector field in a neighborhood of an unstable equilibrium, we consider the associated
dynamics perturbed by small noise. We give a revealing elementary proof of a result proved earlier using heavy machinery from Malliavin calculus. In particular, we obtain precise vanishing noise asymptotics for the tail of the exit time and for the exit distribution conditioned on atypically long exits.
\end{abstract}

\section{Introduction}
In a recent paper~\cite{BPG2017}, we studied tails of diffusion exit times 
from neighborhoods of unstable critical points, in the limit of vanishing noise, in one dimension. 
The typical exit time $\kappa_\eps$ in this setting (see the detailed description of the setting below) is of the order of $\frac{1}{\lambda}\log\frac{1}{\eps}$, where $\lambda>0$ is the local expansion coefficient of the linearization of the system near the critical point,  and $\eps\downarrow 0$ is the noise magnitude. 
The main result of~\cite{BPG2017} is that the following polynomial asymptotics holds for a class of initial conditions near the critical point:
\begin{equation}
\label{eq:recalling-old-result}
\Pp\left(\kappa_\eps>\frac{\alpha}{\lambda}\log \frac{1}{\eps}\right)=c\eps^{\alpha-1}(1+o(1)),\quad \eps\downarrow 0,
\end{equation}
for $\alpha>1$, with an explicit dependence of the factor $c$ on the initial condition and the parameters of the model, see \eqref{eq:main-thm-claim-1} below for details.

This result is a part of an ongoing effort to understand the long-term properties of multi-dimensional diffusions in the context of noisy heteroclinic networks, including the limiting behavior of invariant distributions associated with such systems in the vanishing noise limit. 
The typical behavior in such settings is understood for time scales logarithmic in~$\eps^{-1}$, see~\cite{Bak2010}, \cite{Bak2011}, \cite{AB2011}. To see what happens in the long run though, one has to quantify rare events responsible for
transitions that are atypical at the logarithmic time scale. Our work in progress shows that
these rare events play a crucial role in the long-term dynamics near noisy heteroclinic networks. 
Moreover, we argue that they occur exactly due to atypically long stays near unstable critical points. The resulting picture is similar to that of metastability but with polynomial transition rates in place of exponential ones.  We give more details on this picture in Section~\ref{sec:heteroclinic} while here we only reiterate that the result of the form~\eqref{eq:recalling-old-result} and its ramifications will be crucial for that program. However,
the technique we used in~\cite{BPG2017} to analyze densities of auxiliary random variables, was based on heavy tools from Malliavin calculus. That approach 
somewhat obscures the reason why this result is true and does not seem to be tractable when applied to the study of the analogous exit problem in the neighborhood of a hyperbolic saddle in $\R^d$, $d>1$, i.e. when both attracting and repelling directions are present.

In the present note, our goal is to give a new proof of this result that  (a) is based on a more precise description of the dynamics at small scales, (b) uses more elementary tools of stochastic calculus, and (c) has a strong potential to be applicable in higher dimensions. In fact, we prove a slightly more general result on probabilities of the form $\Pp\left(\kappa_\eps>\frac{\alpha}{\lambda}\log \frac{1}{\eps}+t\right)$, $\alpha>1$, for all $t\in\R$ instead of $t=0$ considered in~\cite{BPG2017}. It turns out that,
asymptotically, the dependence on $t$ is exponential, which implies that for any $T\in\R$,
$\kappa_\eps-\frac{\alpha}{\lambda}\log \frac{1}{\eps}-T$ conditioned on $\kappa_\eps-\frac{\alpha}{\lambda}\log \frac{1}{\eps}>T$
converges in distribution to an exponential random variable. This phenomenon is a manifestation of loss of memory in the system under conditioning and it is consistent with the fact that 
$\kappa_\eps - \frac{1}{\lambda}\log \frac{1}{\eps}$ converges in distribution to a random variable with exponentially decaying right tails, see e.g. \cite{Bak2010}.

An important ingredient in this note is a conditional equidistribution result (Lemma~\ref{lem:X-equid}) that states that the distribution of the diffusion, conditioned on no exit from a small interval, converges 
to the uniform distribution.  Thus our new approach is closer in the spirit to the one based on quasi-stationary distributions, see~\cite{Champagnat2016}. However, the existing general theory does not provide
answers for us since in our situation both the system and the domain depend on~$\eps$. Moreover, the time scales we are interested in are too short for the $t\to\infty$ limit to be a good approximation while taking $\e\downarrow 0$.

\bigskip

Let us be more precise now. We consider the family of stochastic differential equations
\begin{equation}\label{eq:SDE}
dX_{\eps}(t)=b\left(X_{\eps}(t)\right)dt+\eps\sigma\left(X_{\eps}(t)\right)dW(t),
\end{equation}
on a bounded interval $\cI=[q_-,q_+]\subseteq \mathbb{R}$ with origin in its interior. The drift is given by a  vector field~$b\in\mathcal{C}^{2}(\R)$ and the random perturbation is given via a standard Brownian motion $W$
with respect to a filtration $(\cF_t)_{t\ge 0}$ defined on some probability space $(\Omega,\mathcal{F},\Pp)$ under the usual conditions. The noise magnitude is given by a small parameter $\eps>0$ in front of the diffusion coefficient $\sigma$, which is assumed to be Lipschitz and satisfy $\sigma(0)>0$.
Although we are interested only
in the evolution within~$\cI$, we
can assume that $b$ and $\sigma$ are globally Lipschitz without changing the setting.

Standard results on stochastic differential equations (see, e.g.,~\cite[Chapter 5]{KS1991}) imply that for any starting location $X^{\eps}(0)\in\cI$, the equation~\eqref{eq:SDE} has a unique strong solution up to
the exit time from $\cI$,
\[
\tau_{\cI}^{\eps}=\inf\{t\geq 0: X_{\eps}(t)\in\partial \cI\}.
\]

Let $(S^t)_{t\in\R}$ be the flow generated by the vector field $b$, i.e., $x(t)=S^tx_0$ is the solution of the autonomous ordinary differential equation
\begin{equation*}
\dot{x}(t)=b(x(t)),\qquad x(0)=x_0\in\R
\end{equation*}
(we recall that solving this equation for negative times is equivalent to solving $\dot y(t)=-b(y(t))$ for $y(t)=x(-t)$).
We assume that there is a unique repelling zero of the vector field $b$ on $\R$, which, without loss of generality, we
place at the origin. In other words, we assume that $b(0)=0$ and, for some $\lambda>0$ and $\eta\in\cC^2(\cI)$,
\begin{equation}\label{eq:linearizable}
b(x)=\lambda x+\eta(x)|x|^2,\qquad x\in\cI.
\end{equation}
Note that since the origin is the only zero of $b$ in the closed interval $\cI$, this assumption implies that for all $x\neq 0$, there is a uniquely defined
finite time $T(x)$ such that $S^{T(x)}\in\partial\cI$.

Under
the condition \eqref{eq:linearizable},  the map $f:\cI\to\R$ defined by
\begin{equation}\label{eq:coord-change-diffeo}
f(x)=\lim_{t\to\infty}e^{\lambda t} S^{-t}x=x-\int_0^\infty e^{\lambda s}\eta(S^{-s}x)|S^{-s}x|^2ds
\end{equation}
is an order preserving $\cC^2$-diffeomorphism
(see \cite{Eizenberg:MR749377}). In particular, $f(q_-)<0<f(q_+)$. This map linearizes the flow $(S^t)$ (see~\eqref{eq:conjugation})  and helps to state the main result concisely, see~\eqref{eq:main-thm-claim-1}.

Under the above assumptions, a version of the following theorem was proved in \cite{BPG2017}. In its statement and throughout the paper we use
\begin{equation}
\label{eq:Gaussian-density}
\psi(t,x)=\frac{1}{\sqrt{2\pi t}}e^{-\frac{x^2}{2t}}.
\end{equation}

\begin{theorem}\label{thm:main-theorem}
Consider $X_\e$ defined by \eqref{eq:SDE} with initial condition $X_\e(0)=\e x$ and let $K(\eps)$ be any function that satisfies
\begin{equation}\label{eq:K-criterion}
\lim_{\e\downarrow 0}\e^{\gamma}K(\e)=0,\qquad \forall \gamma>0.
\end{equation}
Then, for all $\alpha>1$ and all $t\in\R$,
\begin{equation}\label{eq:main-thm-claim-1}
\lim_{\e\downarrow 0}\sup_{|x|\leq K(\e)}\left|\e^{-(\alpha-1)}\Pp\left(\tau_{\cI}^\e>\frac{\alpha}{\lambda}\log\e^{-1}+t;\ X_\e(\tau_{\cI}^\e)=q_{\pm}\right)-e^{-\lambda t}|f(q_\pm)|\psi_0(x)
\right|
=0,
\end{equation}
where
\[
\psi_0(x)=\psi\left(\frac{\sigma^2(0)}{2\lambda},x\right)=\sqrt{\frac{\lambda}{\pi}}\frac{e^{-\lambda\left(\frac{x}{\sigma(0)}\right)^2}}{\sigma(0)}.
\]
In particular, for any $T\in\R$,
\begin{multline*}
\textsc{Law}\left[\left(\tau_{\cI}^\e-\frac{\alpha}{\lambda}\log\e^{-1}-T,\ X_\e(\tau_{\cI}^\e)\right)\bigg|\tau_{\cI}^\e>\frac{\alpha}{\lambda}\log\e^{-1}+T\right]\\ \Rightarrow \exp_\lambda\otimes\left(\frac{|f(q_-)|}{|f(q_-)|+|f(q_+)|}\delta_{q_-}+\frac{|f(q_+)|}{|f(q_-)|+|f(q_+)|}\delta_{q_+}\right),
\end{multline*}
where $\Rightarrow$ stands for weak convergence, and   $\exp_{\lambda}$ is the exponential distribution with rate $\lambda>0$, i.e., $\exp_\lambda[t,\infty)=e^{-\lambda t}$ for $t\ge 0$.
\end{theorem}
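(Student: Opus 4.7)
The plan is to linearize via the conjugating diffeomorphism $f$ and reduce the claim to a Gaussian tail estimate for the driving noise. Setting $Z_\e(t)=f(X_\e(t))$, I would first differentiate the conjugation identity $f\circ S^t=e^{\lambda t}f$ (implicit in~\eqref{eq:coord-change-diffeo}) at $t=0$ to get $f'(x)b(x)=\lambda f(x)$. Itô's formula then gives
\[
dZ_\e(t)=\lambda Z_\e(t)\,dt+\e\,a(X_\e(t))\,dW(t)+\tfrac{1}{2}\e^2 c(X_\e(t))\,dt,\qquad a=f'\sigma,\ c=f''\sigma^2,
\]
and variation of parameters produces
\[
Z_\e(t)=e^{\lambda t}\Bigl(f(\e x)+\e M_\e(t)+\tfrac{\e^2}{2}C_\e(t)\Bigr),\qquad M_\e(t)=\int_0^t e^{-\lambda s}a(X_\e(s))\,dW(s).
\]
Since $a(0)=\sigma(0)$, whenever $X_\e$ remains close to $0$ on the event of interest $M_\e(t)$ is well approximated by the Gaussian martingale $\sigma(0)\int_0^t e^{-\lambda s}\,dW(s)$, whose $L^2$-limit $M_\infty$ is centered Gaussian with variance $\sigma^2(0)/(2\lambda)$ and density $\psi_0$.

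Next, introduce the exit time $\theta_\e$ from a shrinking neighborhood $\cI_\e=[-\e^\beta,\e^\beta]$ for appropriately chosen $\beta\in(\alpha-1,1)$. A deterministic flow calculation gives the transit time from $\pm\e^\beta$ to $q_\pm$ as $\lambda^{-1}\log(|f(q_\pm)|/\e^\beta)+o(1)$, and a standard Gronwall estimate on that bounded ballistic phase shows that the actual transit time and exit location agree with the deterministic ones outside an event of probability $o(\e^{\alpha-1})$. Combining this with the terminal identity $Z_\e(\tau_\cI^\e)=f(q_\pm)$ and the representation above, and solving for $\tau_\cI^\e$, the event $\{\tau_\cI^\e>\tfrac{\alpha}{\lambda}\log\e^{-1}+t,\,X_\e(\tau_\cI^\e)=q_\pm\}$ coincides, up to $o(\e^{\alpha-1})$ error, with
\[
\bigl\{0<\pm(x+M_\infty)<|f(q_\pm)|\e^{\alpha-1}e^{-\lambda t}(1+o(1))\bigr\}.
\]

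The probability of this Gaussian event is evaluated directly: $\psi_0$ is continuous and even, and since $|x|\le K(\e)$ with $K(\e)$ subpolynomial, $\psi_0$ is essentially constant on the target interval of width $|f(q_\pm)|\e^{\alpha-1}e^{-\lambda t}(1+o(1))$, giving probability $|f(q_\pm)|\e^{\alpha-1}e^{-\lambda t}\psi_0(x)(1+o(1))$ uniformly in $x$, which is~\eqref{eq:main-thm-claim-1}. The second assertion follows by taking the ratio of probabilities at shifts $T+s$ and $T$, summing over $\pm$ in the denominator: the $\psi_0(x)$ prefactor cancels, an $e^{-\lambda s}$ survives to produce the rate-$\lambda$ exponential law on the time component, and the boundary weights become $|f(q_\pm)|/(|f(q_-)|+|f(q_+)|)$, as claimed.

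The main obstacle is uniform quantitative control of the errors $\e(M_\e(\theta_\e)-M_\infty)$ and $\e^2 C_\e(\theta_\e)$, magnified by $e^{\lambda\theta_\e}$: since $\theta_\e$ is of order $\log\e^{-1}$ on the event of interest, unconditional $L^2$ estimates produce a typical error of order $\e\sqrt{\log\e^{-1}}$, which is insufficient to beat the target $\e^{\alpha-1}$ when $\alpha\geq 2$. The resolution comes from conditioning: on the rare event $\{\tau_\cI^\e>\alpha\lambda^{-1}\log\e^{-1}\}$ the process $X_\e$ is forced to remain atypically close to $0$, and Lemma~\ref{lem:X-equid} quantifies this by identifying the conditional law of $X_\e$ given non-exit from $\cI_\e$ as approximately uniform on $\cI_\e$. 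This yields a conditional $L^2$ bound on $a(X_\e)-\sigma(0)$ and $c(X_\e)$ strong enough to push the combined error below the $\e^{\alpha-1}$ threshold uniformly for all $\alpha>1$, closing the argument.
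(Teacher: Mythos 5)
You correctly set up the linearizing change of variables, the Duhamel representation, and the reduction to an event of the form $\{|x+M_\eps(\tau)|\lesssim \eps^{\alpha-1}\}$, and your treatment of the ballistic phase (transit from $\pm\eps^\beta$ to $q_\pm$ via large deviations) matches Proposition~\ref{prop:LDPestim}. You also correctly identify the crux of the difficulty: after amplification by $e^{\lambda t}$, the deviation of the true martingale $M_\eps$ from its Gaussian surrogate must be controlled at scale $\eps^{\alpha-1}$, and a single unconditional estimate fails once $\alpha\ge 2$.

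The gap is in the final paragraph, where you wave at a resolution that does not actually close the argument. First, Lemma~\ref{lem:X-equid} is a statement about the \emph{auxiliary linear process} $Z_\eps$ with constant diffusion coefficient, established by direct Gaussian computations; you invoke it as a property of $X_\eps$ (or $Y_\eps$), which is circular: transferring equidistribution from $Z_\eps$ to $Y_\eps$ is precisely what has to be proved, and it is only accessible via the pathwise comparison of Lemma~\ref{lem:Delta-estimate}, which holds only on time intervals of length $\lesssim\frac{1-\beta}{\lambda}\log\eps^{-1}$. Second, even granting a conditional confinement bound $|Y_\eps(s)|\lesssim\eps^\beta$ up to time $\tau$, a Lipschitz bound on $\tilde\sigma$ gives a martingale error of order $\eps^\beta$, which beats $\eps^{\alpha-1}$ only when $\alpha<1+\beta<2$; no choice of $\beta\in(0,1)$ makes a single such bound suffice for $\alpha\ge 2$. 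What actually resolves this is not a stronger one-shot $L^2$ estimate, but the iterative scheme of Section~\ref{sec:extension}: partition $[0,t_\eps]$ into $N>\theta$ subintervals short enough that Theorem~\ref{thm:Y-exits-with-X} (the $\alpha<1+\beta$ case) applies on each, use the Markov property and Lemma~\ref{lem:small_channel} to restrict to the event that the process returns to an $O(\eps K(\eps))$ neighborhood of the origin at each partition time, and propagate the density $\psi_0$ through the chain using \eqref{eq:h-result-for-Y}, which in turn rests on the equidistribution Lemma~\ref{lem:X-equid} applied to $Z_\eps$ on each short block. Without this renewal structure, the error does not stay below threshold, so as written the proposal proves the theorem only for $\alpha\in(1,2)$.
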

\begin{remark}\rm 
We say that a function satisfying \eqref{eq:K-criterion}  \textit{grows at most subpolynomially} at $0$. We say that a function $c(\e)$ \textit{decays at most subpolynomially} at $0$ if $1/c(\e)$ grows subpolynomially at $0$. For brevity, we will usually omit the reference to $0$ and simply say  \textit{grows/decays subpolynomially} even when the function might not actually grow.
\end{remark}
\begin{remark}\rm
The theorem is stated for initial conditions that are at most of the order of $\eps$ away from the origin up to a subpolynomial factor. The case of initial conditions of the order of $\eps^\beta$ for $\beta<1$ is less interesting since then the tails of exit times decay as stretched exponentials of $\eps^{-1}$ instead of the power decay given by~\eqref{eq:main-thm-claim-1} (see Proposition \ref{prop:LDPestim}).
\end{remark}
\begin{remark}\rm  In the statement of Theorem~\ref{thm:main-theorem} and in the sequel, we adopt the usual convention  that each relation involving $\pm$ and $\mp$ stands for two relations, one with all top signs and one with all all bottom signs.
\end{remark}

The brief outline of our approach to the proof of this theorem is as follows. It is convenient to work in coordinates given by the function $f$ defined in \eqref{eq:coord-change-diffeo} where the drift is linear. We study the dynamics of the linear process in two separate phases: (1) in a neighborhood of the critical point of radius $\e^{\beta}$ for $\beta\in(0,1)$; (2) between leaving this small neighborhood and reaching the boundary of $f(\cI)$. 

In the second stage, the drift dominates the noise, and the process closely follows the corresponding deterministic trajectory one obtains by setting $\e=0$. The outcome of the first stage, i.e., the
exit from $[-\e^{\beta},\e^{\beta}]$, is determined though by a delicate interplay between the noise and the drift in an even smaller neighborhood of the origin ($\beta$ can be chosen arbitrarily close to one). We study this regime by introducing an auxiliary process $Z_\e(t)$ with constant diffusion coefficient approximating $Y_\e(t)=f(X_\e(t))$ pathwise at least over time intervals that are not too large and for which Theorem \ref{thm:main-theorem} is easier to establish.  Since $Z_\e(t)$ and $Y_\e(t)$ do not, in general, stay close on longer timescales, we introduce an iterative scheme to tackle this problem. Namely, we split the longer time interval into shorter ones and show that a useful approximation result, which holds under conditioning on the process not having exited the spatial interval, can be applied sequentially.

The plan of the paper is as follows. In Section \ref{sec:proof-of-main}, we perform the aforementioned change of variables to linearize the drift and prove Theorem \ref{thm:main-theorem} using an intermediate result on the exit from a small neighborhood of the origin. In Section \ref{sec:linear-additive}, we introduce an auxiliary process, which is fully linear and thus allows us to derive certain properties of the exit problem through explicit calculations. In Section \ref{sec:short-time}, we prove an approximation result which allows us to transfer these properties from the fully linear process to the case where only the drift is linear as long as the timescales involved are not too large. Finally, in Section \ref{sec:extension}, we use an iterative scheme to lift this limitation thereby finishing the proof of the intermediate result. In Section~\ref{sec:heteroclinic}, we explain how the result of this paper fits our program on long-term behavior of diffusions near heteroclinic networks.

{\bf Acknowledgment.} Yuri Bakhtin gratefully acknowledges partial support from NSF via grant
DMS-1811444. 

\section{Proof of Theorem \ref{thm:main-theorem}}\label{sec:proof-of-main}
As outlined above, we study the system first in a small neighborhood of the origin and then  after the process has escaped this small neighborhood. 

Let us start with the first part. The diffeomorphism 
$f:\cI\to\R$ introduced in \eqref{eq:coord-change-diffeo} and its inverse $g=f^{-1}$ 
provide a conjugation between the flow~$(S^t)$ and a linear flow:
\begin{equation}\label{eq:conjugation}
f(S^tx)=e^{\lambda t}f(x),\qquad\textrm{or}\qquad f'(x)b(x)=\lambda f(x).
\end{equation}
Note that the integrand in \eqref{eq:coord-change-diffeo} is quadratic when $x$ is close to zero and thus we have $f(0)=0$ and $f'(0)=1$. Outside of $\cI$, we define $f$ so that $f'$ and $f''$ are bounded.

Let $Y_\e(t)=f(X_\e(t))$ for times prior to the escape from $\cI$.
It\^o's formula and \eqref{eq:conjugation} then imply that this process satisfies the stochastic differential equation
\begin{equation}\label{eq:lin-SDE}
dY_\e(t)=\lambda Y_\e(t)dt+\e\tilde\sigma(Y_\e(t))dW(t)+\frac{\e^2}{2}h(Y_\e(t))dt
\end{equation}
for $t<\tau_{\cI}^\e$, where $\tilde\sigma(y)=f'(g(y))\sigma(g(y))$ and $h(y)=f''(g(y))\sigma^2(g(y))$.
 Due to boundedness of $f'$ and $f''$, $\tilde\sigma$ and $h$ are also bounded. 
 
 By Duhamel's formula, $Y_\e$ satisfies the integral equation
\begin{equation}\label{eq:Y-Duhamel}
Y_\e(t)=e^{\lambda t}\left(Y_\e(0) + \e U_\e(t)+\e^2 V_\e(t)\right),
\end{equation}
where
\[
U_\e(t)=\int_0^te^{-\lambda s}\tilde\sigma(Y_\e(s))dW(s),\qquad V_\e(t)=\frac{1}{2}\int_0^te^{-\lambda s}h(Y_\e(s))ds.
\]
Due to our conventions on $f',f''$ outside of $\cI$, the processes $U_\eps(t)$ and $V_\eps(t)$ are defined for all $t\ge 0$. Moreover, 
the boundedness of $h$ immediately implies the boundedness of $V_\e(t)$:
\begin{equation}
\label{eq:boundedness_of_V_e}
\|V_\eps(\cdot)\|_\infty\le \frac{\|h\|_\infty}{2\lambda},
\end{equation}
where $\|\cdot\|_\infty$ is the sup-norm on $[0,\infty)$. The boundedness of $\tilde\sigma$ yields a similar conclusion about the quadratic variation of $U_\e(t)$. Hence,  the existence of constants $c_1,c_2, N_0>0$ such that 
\begin{equation}\label{eq:exp-martingale-ineq}
\Pp\left(\sup_{t\ge 0}|U_\e(t\wedge\tau_\cI^{\e})|\geq N\right)\leq c_1e^{-c_2N^2},\qquad N\geq N_0,
\end{equation}
is implied by the following exponential martingale inequality (see, e.g., Problem~12.10 in~\cite{Bass:MR2856623}):
\begin{lemma}\label{lem:exp-marting-ineq}
Let $M(t)$ be a centered martingale with quadratic variation process $\langle M\rangle_t$. Then
\[
\Pp\left(\sup_{t\ge 0}|M(t)|\geq a; \langle M\rangle_{\infty}\leq b\right)\leq 2e^{-\frac{a^2}{2b}}.
\]
\end{lemma}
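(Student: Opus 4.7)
The plan is to prove the one-sided inequality $\Pp\bigl(\sup_{t\ge 0} M(t) \geq a,\ \langle M\rangle_\infty \leq b\bigr) \leq e^{-a^2/(2b)}$; applying the same bound to $-M$ and taking a union bound then produces the factor of $2$ in the lemma. The main tool is the exponential process $\mathcal{E}_\theta(t) = \exp\bigl(\theta M(t) - \tfrac{\theta^2}{2}\langle M\rangle_t\bigr)$, which for every $\theta\in\R$ is a nonnegative local martingale starting at $1$, hence a supermartingale by Fatou's lemma.

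The key trick is to decouple the two constraints defining our event by stopping the process at the first time the quadratic variation exceeds $b$. Introduce
\[
\tau_b = \inf\{t\ge 0:\langle M\rangle_t > b\}.
\]
By continuity of $t\mapsto\langle M\rangle_t$, we have $\tau_b = \infty$ on $\{\langle M\rangle_\infty \leq b\}$, while in general $\langle M\rangle_{t\wedge\tau_b}\leq b$ deterministically. The stopped process $\mathcal{E}_\theta(\cdot\wedge\tau_b)$ is again a nonnegative supermartingale, and for $\theta>0$ it satisfies the pointwise lower bound
\[
\mathcal{E}_\theta(t\wedge\tau_b) \;\geq\; \exp\bigl(\theta M(t\wedge\tau_b) - \tfrac{\theta^2}{2}b\bigr).
\]
Consequently, on the event $\{\sup_{t\ge 0} M(t) \geq a\}\cap\{\langle M\rangle_\infty \leq b\}$ the maximum of the stopped exponential satisfies $\sup_{t\ge 0}\mathcal{E}_\theta(t\wedge\tau_b) \geq e^{\theta a - \theta^2 b/2}$.

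Doob's maximal inequality for nonnegative supermartingales applied to $\mathcal{E}_\theta(\cdot\wedge\tau_b)$ on $[0,T]$ gives $\Pp(\sup_{t\le T}\mathcal{E}_\theta(t\wedge\tau_b)\geq c)\leq 1/c$. Letting $T\to\infty$ by monotone convergence of the supremum and combining with the display above yields
\[
\Pp\bigl(\sup_{t\ge 0} M(t)\ge a,\ \langle M\rangle_\infty\le b\bigr)\;\le\; e^{-\theta a + \theta^2 b/2},
\]
valid for every $\theta>0$. The optimal choice $\theta=a/b$ produces the exponent $-a^2/(2b)$, completing the one-sided bound. The only genuinely delicate points are the supermartingale property of $\mathcal{E}_\theta$ (for which the nonnegative local martingale argument suffices) and the passage of Doob's inequality to the infinite horizon; both are standard when $M$ is a continuous martingale, which is precisely the situation in the intended application to the It\^o integral $U_\e$ above.
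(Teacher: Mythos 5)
The paper does not prove this lemma at all; it simply cites it as Problem 12.10 in Bass's textbook, so there is no in-paper argument to compare against. Your proof is correct and is precisely the standard argument for this exponential martingale inequality: form the exponential supermartingale $\mathcal{E}_\theta$, stop at $\tau_b=\inf\{t:\langle M\rangle_t>b\}$ to decouple the event on $\langle M\rangle_\infty$ from the event on $\sup M$, bound the stopped $\mathcal{E}_\theta$ from below on the target event, apply Doob's maximal inequality to the stopped nonnegative supermartingale, and optimize the Chernoff-type bound over $\theta>0$. The two implicit hypotheses needed for the argument to go through --- continuity of $M$ (so that $\mathcal{E}_\theta$ is a local martingale and $\langle M\rangle_{t\wedge\tau_b}\le b$ holds pathwise) and $M(0)=0$ (so that $\mathcal{E}_\theta(0)=1$) --- both hold in the paper's application to the It\^o integral $U_\eps$, and you correctly flag the continuity point.
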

 
 Let us take $\beta\in(0,1)$ and set 
 $\mathcal{V}=g\left([-\e^{\beta},\e^{\beta}]\right)\subseteq \cI$. 
The following result describes the tail behavior of  $\tau_\cV^\e$, the exit time from~$\mathcal{V}$. In particular, it says that, in the $\e\downarrow 0$ asymptotics, the choice of the exit direction is distributed symmetrically independently of the exit time.
 \begin{theorem}\label{thm:linear}
Let $Y_\e(0)=\e y$, where $|y|\leq K(\e)$ with $K(\e)$ growing subpolynomially at $0$. Then, for all $\alpha>1$, $C\in\R$, and any function $c(\eps)$ satisfying $\lim_{\eps\to 0}c(\eps)=0$, there is $\beta_0\in(0,1)$ such that for $\beta\in(\beta_0,1)$, we have
\begin{equation}\label{eq:linear-exit-time-tail}
\lim_{\e\downarrow 0}\sup_{|y|\leq K(\e)}\left|\e^{-(\alpha-1)}\Pp\left(\tau_{\cV}^\e>\frac{\alpha-\beta}{\lambda}\log\e^{-1}-C+c(\e); Y_\e(\tau_\cV^\e)=\pm\e^{\beta}\right)-
e^{\lambda C}\psi_0(y)
\right|
=0.
\end{equation}

\end{theorem}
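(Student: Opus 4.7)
The starting point is the Duhamel representation \eqref{eq:Y-Duhamel}, which rewrites the event $\{\tau_\cV^\e > T;\ Y_\e(\tau_\cV^\e) = \pm\e^\beta\}$, with $T=\frac{\alpha-\beta}{\lambda}\log\e^{-1}-C+c(\e)$, in terms of the trajectory $s\mapsto y+U_\e(s)+\e V_\e(s)$: no exit by time $s$ is equivalent to this quantity staying in the shrinking window $(-\e^{\beta-1}e^{-\lambda s},\e^{\beta-1}e^{-\lambda s})$, and at $s=T$ the half-width of this window is $\e^{\alpha-1}e^{\lambda C}(1+o(1))$. Since $\|V_\e\|_\infty=O(1)$ by \eqref{eq:boundedness_of_V_e} and the factor $e^{-\lambda s}$ in the integrand of $U_\e$ forces $U_\e$ to stabilize quickly, the event in question should asymptotically reduce to $\{y+U_\e(T)\in I_\pm\}$ with $I_+=(0,\e^{\alpha-1}e^{\lambda C})$ and $I_-=-I_+$. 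If $U_\e(T)$ is approximately Gaussian with variance $\sigma^2(0)/(2\lambda)$, its density at $-y$ equals $\psi_0(y)$, so the probability is approximately $\psi_0(y)\e^{\alpha-1}e^{\lambda C}$, which is precisely the desired limit.

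To make the heuristic rigorous, the first step is to analyze the auxiliary additive-noise process $Z_\e$ from Section~\ref{sec:linear-additive}, solving $dZ_\e=\lambda Z_\e\,dt+\e\sigma(0)\,dW$ with $Z_\e(0)=\e y$. For $Z_\e$, the relevant noise integral $M(t)=\int_0^t e^{-\lambda s}\,dW(s)$ is explicitly Gaussian with variance $(1-e^{-2\lambda t})/(2\lambda)$, and the analogous probability can be computed by direct Gaussian estimates. The subpolynomial growth of $K(\e)$ is used here to guarantee that, uniformly in $|y|\le K(\e)$, the Gaussian density is essentially constant across the target window and coincides with $\psi_0(y)$ up to $o(1)$ error.

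Section~\ref{sec:short-time} will provide a pathwise bound on $\sup_{s\le t_1}|Y_\e(s)-Z_\e(s)|$ on the event of no exit, tight enough to transfer the asymptotic from $Z_\e$ to $Y_\e$ over a single interval of length $t_1$, where $t_1$ is a sufficiently small fraction of $\log\e^{-1}$. This exploits $\tilde\sigma(0)=\sigma(0)$, Lipschitz continuity of $\tilde\sigma$, boundedness of $h$, and the exponential martingale inequality~\eqref{eq:exp-martingale-ineq} together with Gronwall-type estimates. However, $t_1<T$ whenever $\alpha$ is sufficiently large, so this alone does not cover the full horizon.

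The main obstacle is therefore to extend the comparison up to the full horizon $T$, which may be an arbitrarily large multiple of $\log\e^{-1}$. The plan of Section~\ref{sec:extension} is to partition $[0,T]$ into subintervals of length $t_1$ and iterate the short-time comparison using the strong Markov property together with the conditional equidistribution result (Lemma~\ref{lem:X-equid}): conditional on no exit by the end of a subinterval, the law of $Y_\e$ there is close to uniform on the corresponding interval, providing a well-controlled restart for the next iteration. The sharp constant must survive this iteration intact: only the first step carries the $y$-dependence through $\psi_0(y)$, while later steps should contribute only the shift $e^{\lambda C}$ via the stabilization of the noise integral, and the cumulative error has to be controlled uniformly over $|y|\le K(\e)$.
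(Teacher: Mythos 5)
Your plan matches the paper's argument step for step: explicit Gaussian asymptotics for the additive-noise linearization $Z_\eps$ (Section~\ref{sec:linear-additive}, Lemmas~\ref{lem:main-beta-theorem-for-linear} and~\ref{lem:X-equid}), a pathwise comparison between $Y_\eps$ and $Z_\eps$ tight enough to transfer the result over a single subinterval (Section~\ref{sec:short-time}, Lemma~\ref{lem:Delta-estimate} and Theorem~\ref{thm:Y-exits-with-X}), and then an iteration over $N$ subintervals via the Markov property and the conditional equidistribution lemma (Section~\ref{sec:extension}). The one ingredient your sketch leaves implicit, but that the iteration genuinely requires, is a confinement estimate of the type of Lemma~\ref{lem:small_channel}: with overwhelming probability the process must be within $O(\eps K(\eps))$ of the origin at each subinterval boundary in order to survive the next, which is what allows the short-horizon asymptotics (whose uniformity only extends over $|y|\le K(\eps)$) to be reapplied after restarting from the near-uniform conditional law on $[-\eps^\beta,\eps^\beta]$; with it in place, each restart contributes exactly a factor $\eps^{\theta/N}e^{\lambda C/N}$ while the $\psi_0(y)$ factor from the initial step survives, as you describe.
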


We give the proof of Theorem~\ref{thm:linear} in Section~\ref{sec:extension}. 

After exit from $\cV$, the deterministic dynamics dominates the evolution, which means that the exit time will be close to 
\begin{equation}\label{eq:determ-times}
T_\e^\pm:=T\left(g\left(\pm \e^{\beta}\right)\right)=\frac{\beta}{\lambda}\log\e^{-1}+\frac{1}{\lambda}\log |f(q_\pm)|,
\end{equation}
the time it takes for $X_0(t)$ to exit $\cI$ starting at $g\left(\pm \e^{\beta}\right)$.
This is captured by the following standard large deviation estimates.

\begin{proposition}\label{prop:LDPestim}
Let $X_{\e}(0)=g\left(\pm \e^{\beta}\right)$. Then for every $\beta'\in(0,\beta)$ and subpolynomially decaying function $c(\e)>0$, 
we have
\begin{equation}\label{eq:LDP-for-time}
 \Pp\left(\left|\tau_\cI^\e-T_\e^\pm\right|>c(\eps)\right)=o\left(e^{-\frac{1}{\eps^{2(1-\beta')}}}\right)
\end{equation}
and
\begin{equation}\label{eq:LDP-for-direction}
\Pp\left(X_\e(\tau_\cI^\e)=q_{\pm}\right)\geq 1-o\left(e^{-\frac{1}{\eps^{2(1-\beta')}}}\right)
\end{equation}
\end{proposition}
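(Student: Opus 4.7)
The plan is to linearize via $Y_\e=f(X_\e)$ and exploit the Duhamel representation \eqref{eq:Y-Duhamel}. Since $Y_\e(0)=\pm\e^\beta$ is polynomially large in $\e$, the expanding drift should dominate the noise along the whole deterministic arc, so with overwhelming probability the stochastic trajectory stays close to the deterministic orbit $S^t g(\pm\e^\beta)$ and exits $\cI$ through $q_\pm$ near time $T_\e^\pm$.

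Focusing on the $+$ case, set $M(\e)=\tfrac{\lambda}{2}c(\e)\e^{-(1-\beta)}$ and let $A=\{\sup_{s\in[0,T_\e^++c(\e)]}|U_\e(s)|\le M(\e)\}$. On $A$, the identity \eqref{eq:Y-Duhamel} combined with the deterministic bound \eqref{eq:boundedness_of_V_e} on $V_\e$ gives
\[
Y_\e(s)=e^{\lambda s}\e^\beta\bigl(1+r_\e(s)\bigr),\qquad |r_\e(s)|\le\tfrac{\lambda}{2}c(\e)+O(\e^{2-\beta}),
\]
uniformly in $s$ on the specified range. The lower bound $1+r_\e(s)>0$ (for $\e$ small) keeps $Y_\e$ strictly positive, which rules out exit through $q_-$. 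Using $e^{\lambda T_\e^+}\e^\beta=f(q_+)$ from \eqref{eq:determ-times}, evaluating the sandwich at $s=T_\e^+-c(\e)$ yields $Y_\e<f(q_+)$, forbidding exit before $T_\e^+-c(\e)$, while at $s=T_\e^++c(\e)$ it yields $Y_\e>f(q_+)$, forcing exit through $q_+$ by then by continuity. This establishes both \eqref{eq:LDP-for-time} and \eqref{eq:LDP-for-direction} on the event $A$.

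For the complementary event, the exponential martingale inequality \eqref{eq:exp-martingale-ineq} yields
\[
\Pp(A^c)\le c_1\exp\!\Bigl(-\tfrac{c_2\lambda^2}{4}\,c(\e)^2\,\e^{-2(1-\beta)}\Bigr).
\]
Because $c(\e)$ decays subpolynomially, $c(\e)^2\ge\e^{2\gamma}$ eventually for every $\gamma>0$, so the exponent above is at least of order $\e^{-2(1-\beta-\gamma)}$; choosing $\gamma$ suitably in terms of the gap between $\beta$ and $\beta'$ converts this into the stated decay $o(e^{-1/\e^{2(1-\beta')}})$.

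The main obstacle I anticipate is purely arithmetical, namely matching the exponent naturally produced by the exponential martingale inequality, which is of order $\e^{-2(1-\beta)}$, to the target $\e^{-2(1-\beta')}$ by spending the subpolynomial slack in $c(\e)$. The probabilistic content is elementary: a sub-Gaussian stochastic integral scaled by $\e^{-(1-\beta)}$ is vanishingly unlikely to exceed $M(\e)$, and once it does not, the linear drift deterministically forces the exit into the narrow window $[T_\e^+-c(\e),T_\e^++c(\e)]$ through the correct endpoint.
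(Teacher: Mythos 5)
Your proof is correct and follows essentially the same route as the paper's: both pass to the linearized coordinate $Y_\e=f(X_\e)$, use the Duhamel representation \eqref{eq:Y-Duhamel} together with the deterministic bound \eqref{eq:boundedness_of_V_e} on $V_\e$, and control $U_\e$ via the exponential martingale inequality \eqref{eq:exp-martingale-ineq}. The only difference is organizational: you bundle everything into a single good event $A$ on which the deterministic drift forces exit through the correct endpoint in the correct window, whereas the paper bounds the three complement events (wrong endpoint, exit too late, exit too early) one at a time, using a larger threshold $\sim\e^{-(1-\beta)}$ for the wrong-direction bound and a $c(\e)$-scaled threshold (like your $M(\e)$) only for the timing bounds.
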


\bpf 
We start by showing that with overwhelming probability the exit happens through the endpoint that is on the same side as the starting point. Indeed, \eqref{eq:Y-Duhamel},~\eqref{eq:boundedness_of_V_e}, and~\eqref{eq:exp-martingale-ineq} imply
\begin{align*}
\Pp\left(X_\e(\tau_\cI^\e)=q_{\mp}\right)&=\Pp\left(Y_\e(\tau_\cI^\e)=f(q_{\mp})\right)=\Pp\left(\eps^\beta <\left|\eps U_\e(\tau_\cI^\e)+\e^2V_\e(\tau_\cI^\e)\right|\right)
\\&\leq \Pp\left(\eps^\beta <\eps\sup_{t>0}\left| U_\e(t\wedge\tau_\cI^\e)\right|+\e^2\frac{\|h\|_{\infty}}{2\lambda}\right)
\\&\leq\Pp\left(\eps^{-(1-\beta)}-\e\frac{\|h\|_{\infty}}{2\lambda} <\sup_{t>0}\left| U_\e(t\wedge\tau_\cI^\e)\right|\right)=o\left(e^{-\frac{1}{\eps^{2(1-\beta')}}}\right),
\end{align*}
and \eqref{eq:LDP-for-direction} follows.

To prove \eqref{eq:LDP-for-time}, let us introduce
\[
U_\e =U_\e\left((T_\e^\pm+c(\e))\wedge\tau_\cI^\e\right),\qquad V_\e =V_\e\left((T_\e^\pm+c(\e))\wedge\tau_\cI^\e\right),
\]
and note that \eqref{eq:Y-Duhamel} implies
\begin{align*}
\Pp\left(\tau_\cI^\eps>T_\eps^{\pm}+c(\eps)\right) &\le \Pp\left(f(q_-)< e^{\lambda \left(T_\eps^{\pm}+c(\eps)\right)}\left(\pm\eps^\beta + \eps U_\e+\e^2V_\e\right) <f(q_+)\right)
\\ &= \Pp\left(\eps^\beta e^{-\lambda c(\eps)}\frac{f(q_-)}{|f(q_\pm)|} <\pm\eps^\beta + \eps U_\e(t)+\e^2V_\e(t)<\eps^\beta e^{-\lambda c(\eps)}\frac{f(q_+)}{|f(q_\pm)|}\right)
\\&\le \Pp\left( \left|U_\e\right|>\e^{-(1-\beta)}\left(1-e^{-\lambda c(\eps)}\right)-\e\frac{\|h\|_\infty}{2\lambda}\right) +  o\left(e^{-\frac{1}{\eps^{2(1-\beta')}}}\right)
\\&=o\left(e^{-\frac{1}{\eps^{2(1-\beta')}}}\right),
\end{align*} 
where we used  \eqref{eq:determ-times}, \eqref{eq:LDP-for-direction}, \eqref{eq:boundedness_of_V_e}, \eqref{eq:exp-martingale-ineq}, and  the subpolynomial decay of $c(\e)$. Similarly,
\begin{align*}
\Pp\left(\tau_\cI^\eps<T_\eps^{\pm}-c(\eps)\right)
&\le \Pp\left(\sup_{t<T_\eps^{\pm}-c(\eps)}e^{\lambda (t-c(\eps))}\left|\pm\eps^\beta + \eps U_\e(t)+\e^2V_\e(t)\right| > |f(q_{\pm})|\right)+ o\left(e^{-\frac{1}{\eps^{2(1-\beta')}}}\right)
\\&\le \Pp\left(e^{\lambda (T_\eps^{\pm}-c(\eps))}\left(\eps^\beta + \eps \sup_{t<T_\eps^\pm} \left|U_\e(t)\right|+\e^2\frac{\|h\|_{\infty}}{2\lambda}\right) > |f(q_\pm)|\right)+ o\left(e^{-\frac{1}{\eps^{2(1-\beta')}}}\right)
\\&= \Pp\left(\eps^\beta + \eps \sup_{t<T_\eps^{\pm}} \left|U_\e(t)\right|+\e^2\frac{\|h\|_{\infty}}{2\lambda}\ge \eps^\beta e^{\lambda c(\eps)}\right) +  o\left(e^{-\frac{1}{\eps^{2(1-\beta')}}}\right)
\\&\leq \Pp\left( \sup_{t>0} \left|U_\e(t\wedge\tau_\cI^\e)\right|\ge \eps^{-(1-\beta)} \left(e^{\lambda c(\eps)}-1\right)-\e \frac{\|h\|_{\infty}}{2\lambda}\right)+ o\left(e^{-\frac{1}{\eps^{2(1-\beta')}}}\right)
\\&=o\left(e^{-\frac{1}{\eps^{2(1-\beta')}}}\right).
\end{align*}
\epf

\noindent\textit{Proof of Theorem \ref{thm:main-theorem}.}
We have
\[
Y_\e(0)= f(X_\e(0))=f(\e x)=\e x + \mathcal{O}(\e^2 x^2)=\e(x+\mathcal{O}(\e x^2))
\]
and 
\[
|x+\mathcal{O}(\e x^2)|\leq K(\e)+\mathcal{O}\left(\e (K(\e))^2\right)\leq 2K(\e)
\]
for small enough $\e$, so the right-hand side is a function subpolynomially growing at $0$.  
Let us define $\theta^{\pm}_\e=T^\pm_\eps-(\tau_\cI^\e-\tau_\cV^\e)$. 
Due to \eqref{eq:LDP-for-direction},
\begin{align*}
\Pp&\left(X_\e\left(\tau_{\cI}^\e\right)=q_\pm;\tau_{\cI}^\e>\frac{\alpha}{\lambda}\log\e^{-1}{+t}\right)=\Pp\left(Y_\e\left(\tau_{\cV}^\e\right)=\e^{\beta};\tau_{\cV}^\e>\frac{\alpha}{\lambda}\log\e^{-1}{+t}-T^\pm_\eps+\theta^{\pm}_\e\right)+o\left(\e^{\alpha-1}\right),
\end{align*}
where the error term (along with all subsequent error terms) is uniform in the starting points $|x|\leq K(\e)$. Note that the strong Markov property implies the conditional independence of $\theta^{\pm}_\e$ and $\tau_\cV^\e$ given $Y_\e(\tau_\cV^\e)=\e^{\beta}$. This, along with \eqref{eq:LDP-for-time},  allows us to give upper and lower estimates of the first term on the right-hand side:
\begin{multline*}
\Pp\left(Y_\e\left(\tau_{\cV}^\e\right)=\pm\e^{\beta};\tau_{\cV}^\e>\frac{\alpha}{\lambda}\log\e^{-1}{+t}-T^\pm_\eps+c(\e)\right) + o(\e^{\alpha-1})\\
\leq
\Pp\left(Y_\e\left(\tau_{\cV}^\e\right)=\e^{\beta};\tau_{\cV}^\e>\frac{\alpha}{\lambda}\log\e^{-1}{+t}-T^\pm_\eps+\theta_\e^{\pm}\right)\\
\leq\Pp\left(Y_\e\left(\tau_{\cV}^\e\right)=\pm\e^{\beta};\tau_{\cV}^\e>\frac{\alpha}{\lambda}\log\e^{-1}{+t}-T^\pm_\eps-c(\e)\right) + o(\e^{\alpha-1}).
\end{multline*}
where $c(\e)$ is an arbitrary positive function that decays subpolynomially. Now we may apply Theorem \ref{thm:linear} with $C={-t} +\lambda^{-1}\log|f(q_\pm)|$ to both sides and conclude the proof.
\qed

\section{Linear system with additive noise}\label{sec:linear-additive}
In this section, we introduce an auxiliary process, which is a simpler special case of \eqref{eq:lin-SDE}. Namely, we consider
\begin{align}
\label{eq:linear-additive-sde}
dZ_\eps(t)&=\lambda Z_\eps(t)dt + \eps \sigma_0 dW(t),\qquad Z_\eps(0)=\e z,
\end{align}
where $\sigma_0=\tilde{\sigma}(0)=\sigma(0)>0$, $|z|<K(\eps)$, and $K(\eps)$ grows subpolynomially at $0$.

We will need a precise description of the exit of $Z_\e$ from $\cV_Z=[-\e^{\beta}(1+\delta_\e),\e^{\beta}(1+\delta_\e)]$ for $\beta\in (0,1)$ and any $\delta_\eps>0$ satisfying  $\delta_\e\downarrow 0$ as $\e\downarrow 0 $.  Let us introduce a stopping time 
\[
\tau_\eps=\inf\{t>0:\ |Z_\eps(t)|=\eps^\beta(1+\delta_\e)\},
\]
and, for a constant $C$ and  a subpolynomially decaying at $0$  function $c(\eps)$,  a deterministic time
\begin{equation}\label{eq:t_eps}
t_\e=\frac{\alpha-\beta}{\lambda}\log\e^{-1}-C+c(\eps).
\end{equation}
We will often use $C_\e=C-c(\e)$.
The first result of this section is a version of Theorem \ref{thm:linear} for the process $Z_\eps$ with stronger control of the dependence on the initial point.
\begin{lemma}\label{lem:main-beta-theorem-for-linear}
For any $\alpha>1$ and any subpolynomially decaying function $c(\e)$ in the definition of $t_\eps$, there is $c>0$ such that
\[
\lim_{\e\downarrow 0}\sup_{|z|\leq K(\e)}e^{cz^2}\left|\e^{-(\alpha-1)}\Pp\left(\tau_\e>t_\e\right)-2e^{\lambda C}\psi_0(z)\right|
=0.
\]
\end{lemma}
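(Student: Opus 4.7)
The SDE \eqref{eq:linear-additive-sde} is linear with additive noise, so it admits the explicit Gaussian representation
\[
Z_\eps(s) = \eps e^{\lambda s}\bigl(z + M(s)\bigr), \qquad M(s)=\sigma_0\int_0^s e^{-\lambda r}\,dW(r).
\]
The martingale $M$ converges a.s.\ to a centered Gaussian $M(\infty)$ of variance $\sigma_0^2/(2\lambda)$, so $N := z + M(\infty)$ has density $\psi_0(\cdot - z)$.  The lemma will follow by showing that $\{\tau_\eps > t_\eps\}$ agrees, up to events of probability $o(\eps^{\alpha-1})$, with the small-ball event $\{|N| < A_\eps\}$ for $A_\eps = \eps^{\alpha-1}e^{\lambda C_\eps}(1+\delta_\eps)$, and then evaluating the Gaussian probability by a local Taylor expansion.

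\textbf{Path decomposition and sandwich.}  Writing $z+M(s) = N - (M(\infty)-M(s))$ splits the path as
\[
Z_\eps(s) \;=\; \eps e^{\lambda s}N \;-\; R_\eps(s),\qquad R_\eps(s) := \eps\sigma_0\,e^{\lambda s}\!\int_s^\infty e^{-\lambda r}\,dW(r).
\]
A direct covariance computation shows that $R_\eps/\eps$ is a centered stationary Gaussian process with covariance $(\sigma_0^2/(2\lambda))\,e^{-\lambda|t-s|}$, i.e.\ a stationary Ornstein--Uhlenbeck process. Hence, for any subpolynomially decaying $\delta_\eps'\downarrow 0$, standard sup-estimates give
\[
\Pp\!\Bigl(\sup_{s\le t_\eps}|R_\eps(s)| > \eps^\beta \delta_\eps'\Bigr) \;\le\; C(1+t_\eps)\exp\!\Bigl(-c\,\eps^{2(\beta-1)}(\delta_\eps')^2\Bigr),
\]
which is stretched-exponentially small and uniform in $z$. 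On the complement of this bad set, the triangle inequality and the monotonicity of $e^{\lambda s}|N|$ in $s$ yield the sandwich
\[
\eps e^{\lambda t_\eps}|N| - \eps^\beta\delta_\eps' \;\le\; \sup_{s\le t_\eps}|Z_\eps(s)| \;\le\; \eps e^{\lambda t_\eps}|N| + \eps^\beta\delta_\eps',
\]
the lower bound being obtained simply by evaluating at $s=t_\eps$. This translates into the set inclusions $\{|N|<A_\eps^-\}\subseteq\{\tau_\eps>t_\eps\}\subseteq\{|N|<A_\eps^+\}$, up to the bad set, with thresholds $A_\eps^\pm = \eps^{\alpha-1}e^{\lambda C_\eps}(1 + \delta_\eps \pm \delta_\eps')$.

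\textbf{Gaussian evaluation and uniformity.}  Since $\psi_0$ is smooth, even, and has bounded derivatives, Taylor expansion around $y=0$ gives $\Pp(|N|<A) = 2A\psi_0(z) + O(A^3\|\psi_0''\|_\infty)$, the linear correction in $A$ vanishing by symmetry of the interval $[-A,A]$. Substituting $A=A_\eps^\pm$ and dividing by $\eps^{\alpha-1}$ produces $2e^{\lambda C_\eps}(1+\delta_\eps\pm\delta_\eps')\psi_0(z) + O(\eps^{2(\alpha-1)})$, which converges to $2e^{\lambda C}\psi_0(z)$. For the weighted uniformity in $|z|\le K(\eps)$: since $\psi_0(z)\propto e^{-\lambda z^2/\sigma_0^2}$, any $c<\lambda/\sigma_0^2$ keeps $e^{cz^2}\psi_0(z)$ bounded; the $O(\eps^{2(\alpha-1)})$ remainder is $z$-independent; and the stretched-exponentially small bad-set probability readily absorbs the at-most-subpolynomial factor $e^{cK(\eps)^2}$.

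\textbf{Main obstacle.}  The only nontrivial analytic ingredient is the uniform tail bound for $\sup_{s\le t_\eps}|R_\eps(s)|$ over a window growing logarithmically in $\eps^{-1}$; once it is available, the sandwich argument collapses the functional problem to the scalar Gaussian tail of $N$, and the remaining computations are elementary.
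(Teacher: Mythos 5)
Your proof is correct, and the overall strategy is the same as the paper's in spirit — both reduce the event $\{\tau_\eps>t_\eps\}$ to the scalar small-ball event $\{|z+\sigma_0 N_\infty|<\eps^{\alpha-1}e^{\lambda C_\eps}(1+o(1))\}$ and then evaluate the Gaussian density of $z+\sigma_0 N_\infty$ — but the technical route is genuinely different. The paper plugs $t=\tau_\eps$ into the Duhamel formula, solves the resulting identity $(1+\delta_\eps)\eps^\beta=\eps e^{\lambda\tau_\eps}|z+\sigma_0 N(\tau_\eps)|$ for $\tau_\eps$, translates $\{\tau_\eps>t_\eps\}$ into a small-ball condition on $N(\tau_\eps)$, and then replaces $N(\tau_\eps)$ by $N_\infty$ via the exponential martingale inequality applied to the tail $\sup_{t\ge t_\eps}|N(t)-N_\infty|$, together with a strong-Markov independence argument to split off the error term. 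You instead write $Z_\eps(s)=\eps e^{\lambda s}N - R_\eps(s)$, observe that $R_\eps/\eps$ is a stationary Ornstein--Uhlenbeck process (your covariance calculation $\frac{\sigma_0^2}{2\lambda}e^{-\lambda|t-s|}$ checks out), and control $\sup_{s\le t_\eps}|R_\eps(s)|$ uniformly over the whole window to get clean pathwise set inclusions $\{|N|<A_\eps^-\}\subseteq\{\tau_\eps>t_\eps\}\subseteq\{|N|<A_\eps^+\}$ off a stretched-exponentially small bad set. Your version avoids conditioning on the stopping time and the $H_1/H_2$ decomposition, and the $z$-uniformity is transparent because the bad set and the $O(A_\eps^3)$ remainder are $z$-independent while the main term carries the $\psi_0(z)$ decay; the paper's version is more economical in that it only needs tail control on $[t_\eps,\infty)$ rather than a sup over the entire trajectory, but the two are comparable in length and both give the weighted uniformity with any $c<\lambda/\sigma_0^2$.
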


\begin{proof}
Duhamel's formula gives an explicit solution to \eqref{eq:linear-additive-sde}:
\begin{equation}
Z_\eps(t)=\eps e^{\lambda t} (z+\sigma_0 N(t)),
\label{eq:duhamel}
\end{equation}
where $N(t)=\int_0^t e^{-\lambda s} dW(s)$.
Plugging in $\tau_\e$ for $t$, we obtain 
\begin{equation}\label{eq:exit-time-def-eq}
(1+\delta_\e)\e^{\beta}=|Z_\eps(\tau_\e)|=\eps e^{\lambda \tau_\e} \left|z+\sigma_0N(\tau_\e)\right|,
\end{equation}
which is equivalent to
\[
\tau_\e=\frac{1-\beta}{\lambda}\log\e^{-1}-\frac{1}{\lambda}\log|z+\sigma_0N(\tau_\e)|+o(1).
\]
Therefore,
\[
\left\{\tau_\e> t_\e\right\}=\left\{|z+\sigma_0N(\tau_\e)|< e^{\lambda C_\eps}\e^{\alpha-1}(1+o(1))\right\}.
\]

By the martingale convergence theorem, $N(t)$ converges to a random variable $N_{\infty}$ as $t\to\infty$ almost surely and in $L^1$. 
We claim that, in addition, there are $c_1,c_2>0$ such that for any $L>0$,
\begin{equation}
\label{eq:discrepancy-between-N_infty-and-N_stopped}
\Pp\left(|N(\tau_\e)-N_\infty|>L\e^{\alpha-\beta}; \tau_\e>t_\e\right)\leq c_1 e^{-c_2 L^2}.
\end{equation}
Indeed, Lemma~\ref{lem:exp-marting-ineq} implies
\[
\Pp\left(\sup_{t\geq t_\e}|N(t)-N_\infty|\geq L\e^{\alpha-\beta}\right)\leq
2\Pp\left(\sup_{t\geq t_\e}|N(t)-N(t_\e)|\geq L\e^{(\alpha-\beta)}/2\right)
\leq c_1e^{-c_2L^2\e^{2(\alpha-\beta)}e^{2\lambda t_\e}}
\]
for some $c_1,c_2>0$, which proves the claim since $
\e^{\alpha-\beta}e^{\lambda t_\e}=e^{-\lambda C_\eps}.$

Let us fix $\gamma\in (\alpha-1,\alpha-\beta)$ and write
\[
\Pp\left(\tau_\e> t_\e\right)=H_1(z,\e)+H_2(z,\e),
\]
where
\begin{align*}
H_1(z,\e)&=\Pp\left(|z+\sigma_0N_\infty+\sigma_0(N(\tau_\e)-N_{\infty})|< e^{\lambda C_\eps }\e^{\alpha-1}(1+o(1)); |N(\tau_\e)-N_{\infty}|\le \e^{\gamma} \right),\\
H_2(z,\e)&=\Pp\left(|z+\sigma_0N(\tau_\e)|< e^{\lambda C_\eps}\e^{\alpha-1}(1+o(1));|N(\tau_\e)-N_{\infty}|>\e^{\gamma}\right).
\end{align*}
The random variables $N(\tau_\e)$ and $N(\tau_\e)-N_\infty$ are independent due to the strong Markov property. So the Gaussian tail of the maximum of the Brownian motion and
\eqref{eq:discrepancy-between-N_infty-and-N_stopped} imply
\[
H_2(z,\e)\leq c_1e^{-c_2z^2}o(\e^{\alpha-1}).
\]
The desired asymptotics of $H_1(z,\eps)$ follows from  the explicit form of the Gaussian density of 
the random variable $z+\sigma_0 N_\infty$.
\end{proof}

The next result is based on the fact that the distribution of $Z_\e(t_\e)$ conditioned on non-exit is approximately uniform over $[-\e^{\beta}(1+\delta_\e),\e^{\beta}(1+\delta_\e)]$. In fact, this stronger statement is proved as an intermediate step. We first note that the density of any absolutely continuous random variable conditioned on a positive probability event is well-defined.
\begin{lemma}\label{lem:X-equid}
If $f_\eps^c(u)$ is the probability density of $Z_\eps(t_\eps)$ conditioned on $\{\tau_\eps > t_\eps\}$, then for any $\delta>0$
\begin{equation}
\lim_{\e\downarrow 0}\sup_{|u|\leq(1-\delta)\e^{\beta}}\sup_{|z|\leq K(\e)}\left|\eps^{\beta}f_\eps^c(u)- \frac{1}{2}\right|=0.
\label{eq:equidistribution} 
\end{equation}
 Moreover, for any integrable function $h$ with exponentially decaying tails and any subpolynomially growing function $K(\e)$,
\begin{equation}
\lim_{\e\downarrow 0}\sup_{|z|\leq K(\e)}\left|\e^{-(1-\beta)}\E\left[h\left(\e^{-1}Z_{\e}(t_\e)\right)|\tau_\e>t_{\e}\right]-\frac{1}{2} \int_{-\infty}^{\infty} h(y)dy\right|=0.
\label{eq:equidistribution-test-function}
\end{equation}
\end{lemma}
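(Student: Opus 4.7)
The plan is to apply Bayes' rule to write
\[f_\eps^c(u) = \frac{p_\eps(u)\,\Pp(\tau_\eps>t_\eps \mid Z_\eps(t_\eps)=u)}{\Pp(\tau_\eps>t_\eps)},\]
where $p_\eps$ denotes the unconditional density of $Z_\eps(t_\eps)$, and to analyze each factor separately. Lemma~\ref{lem:main-beta-theorem-for-linear} already supplies the denominator as $2\eps^{\alpha-1}e^{\lambda C}\psi_0(z)(1+o(1))$, uniformly in $|z|\le K(\eps)$. So it will suffice to establish two further claims: \textbf{(a)} $p_\eps(u)=\eps^{\alpha-\beta-1}e^{\lambda C}\psi_0(z)(1+o(1))$, and \textbf{(b)} $\Pp(\tau_\eps>t_\eps\mid Z_\eps(t_\eps)=u)\to 1$, both uniformly for $|z|\le K(\eps)$ and $|u|\le(1-\delta)\eps^\beta$. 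The $\psi_0(z)$ factors cancel in the quotient, yielding $\eps^\beta f_\eps^c(u)\to \tfrac12$ uniformly, as required.

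Claim (a) is a direct Gaussian calculation from~\eqref{eq:duhamel}: $Z_\eps(t_\eps)$ is Gaussian with mean $\mu=\eps z e^{\lambda t_\eps}$ and variance $\Sigma^2=\tfrac{\eps^2\sigma_0^2}{2\lambda}(e^{2\lambda t_\eps}-1)$. Since $e^{\lambda t_\eps}=\eps^{\beta-\alpha}e^{-\lambda C_\eps}$, one finds $\Sigma=\tfrac{\sigma_0}{\sqrt{2\lambda}}\eps^{1+\beta-\alpha}e^{-\lambda C_\eps}(1+o(1))$ and the standardized argument $(u-\mu)/\Sigma = -\tfrac{\sqrt{2\lambda}}{\sigma_0}z + O(\eps^{\alpha-1})$ uniformly. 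Subpolynomial growth of $K$ controls the Taylor error in the exponent, so the Gaussian density at $u$ factors into the prefactor $\tfrac{\sqrt{2\lambda}}{\sigma_0\sqrt{2\pi}}\eps^{\alpha-\beta-1}e^{\lambda C_\eps}$ times $e^{-\lambda z^2/\sigma_0^2}(1+o(1))$; recognizing the product as $\psi_0(z)\eps^{\alpha-\beta-1}e^{\lambda C}(1+o(1))$ finishes (a).

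Claim (b) is the technical heart. Linearity of~\eqref{eq:linear-additive-sde} in $Z_\eps$ makes the law of $(Z_\eps(s))_{s\in[0,t_\eps]}$ conditional on its two endpoints an explicit Gaussian bridge with mean
\[m(s) = \eps z e^{\lambda s}\Bigl(1-\tfrac{\sigma_s^2}{\sigma_{t_\eps}^2}\Bigr) + u\, e^{-\lambda(t_\eps-s)}\tfrac{\sigma_s^2}{\sigma_{t_\eps}^2},\qquad \sigma_t^2:=\tfrac{1-e^{-2\lambda t}}{2\lambda},\]
and covariance kernel $\eps^2\sigma_0^2 e^{\lambda(s+s')}\bigl(\sigma_{\min(s,s')}^2-\sigma_s^2\sigma_{s'}^2/\sigma_{t_\eps}^2\bigr)$. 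Straightforward estimation gives $|m(s)|\le 2\eps K(\eps)+(1-\delta)\eps^\beta\le(1-\delta/2)\eps^\beta$ for small $\eps$, leaving a margin $\delta\eps^\beta/2$ to the boundary, while a direct calculation shows $\mathrm{Var}(Z_\eps(s)-m(s))\le\eps^2\sigma_0^2/\lambda$ uniformly in $s\in[0,t_\eps]$. The main obstacle is then a genuine Gaussian supremum estimate over the long horizon $t_\eps\sim\log\eps^{-1}$: a Dudley-type entropy bound should yield $\E\sup_s|Z_\eps(s)-m(s)|=O(\eps\sqrt{\log t_\eps})=o(\eps^\beta)$, and Borell--TIS then gives
\[\Pp\bigl(\sup_{s\le t_\eps}|Z_\eps(s)-m(s)|>\tfrac{\delta}{2}\eps^\beta\,\big|\, Z_\eps(t_\eps)=u\bigr)\le C\exp(-c\delta^2\eps^{2(\beta-1)}),\]
superpolynomially small, which combined with the margin on $m$ establishes (b) uniformly and hence proves~\eqref{eq:equidistribution}.

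The test-function statement~\eqref{eq:equidistribution-test-function} will follow from~\eqref{eq:equidistribution} via the substitution $y=u/\eps$. The density of $\eps^{-1}Z_\eps(t_\eps)$ under the conditional law equals $\eps f_\eps^c(\eps y)\approx\eps^{1-\beta}/2$ on $|y|\le(1-\delta)\eps^{\beta-1}$, so
\[\eps^{-(1-\beta)}\E\bigl[h(\eps^{-1}Z_\eps(t_\eps))\mid\tau_\eps>t_\eps\bigr] = \eps^\beta\int_\R h(y)\, f_\eps^c(\eps y)\,dy.\]
Splitting the integral at $|y|=(1-\delta)\eps^{\beta-1}$, the main piece converges to $\tfrac12\int_\R h$ by dominated convergence (with the uniform crude bound $\eps^\beta f_\eps^c=O(1)$ inherited from the Bayes expression, since the $\psi_0(z)$ factors in numerator and denominator cancel). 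The remaining contribution is controlled by the exponential tail of $h$ evaluated past $(1-\delta)\eps^{\beta-1}$ times the conditional mass on that region (bounded above by the trivial bound $1$), producing a bound of order $\exp(-c\eps^{\beta-1})$ that vanishes faster than any power of $\eps$ after multiplication by $\eps^{-(1-\beta)}$.
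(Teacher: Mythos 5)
Your proposal is correct, and the decomposition differs from the paper's in an interesting way. Both you and the authors compute the unconditional Gaussian density $p_\eps(u)\approx\eps^{\alpha-\beta-1}e^{\lambda C}\psi_0(z)$ (your claim (a) is exactly the paper's~\eqref{eq:Gaussian-density-asymptotics}) and both divide by $\Pp(\tau_\eps>t_\eps)$ from Lemma~\ref{lem:main-beta-theorem-for-linear}. The difference is how one shows that ``already exited'' paths contribute negligibly. You factor via Bayes' rule and show that the bridge probability $\Pp(\tau_\eps>t_\eps\mid Z_\eps(t_\eps)=u)\to1$ uniformly for interior $u$, using the explicit mean and covariance of the Gaussian bridge, a Dudley-type estimate $\E\sup_s|Z_\eps(s)-m(s)|=O(\eps\sqrt{\log t_\eps})=o(\eps^\beta)$, and Borell--TIS. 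The paper instead decomposes $p_\eps=f_\eps+g_\eps$ into sub-densities on $\{\tau_\eps>t_\eps\}$ and $\{\tau_\eps\le t_\eps\}$ and bounds $g_\eps$ directly by integrating the one-step Gaussian transition kernel from the boundary $\pm\eps^\beta(1+\delta_\eps)$ over the hitting time, showing it is superpolynomially small for $|u|\le(1-\delta)\eps^\beta$ (inequalities~\eqref{eq:for_large_z3}--\eqref{eq:for_small_z__23}). The two are mathematically equivalent since $g_\eps(z,u)=p_\eps(u)\,\Pp(\tau_\eps\le t_\eps\mid Z_\eps(t_\eps)=u)$, so $g_\eps/p_\eps\to0$ is precisely your claim (b). The paper's route is more elementary (no chaining, no Borell--TIS, just a direct maximization of a one-dimensional Gaussian density over a compact parameter range), while yours is arguably more conceptual but invokes Gaussian-supremum machinery and requires carrying out the entropy estimate over the $\log\eps^{-1}$-length horizon that you only sketch (``should yield''); this estimate is in fact true --- the weighted bridge $\eps\sigma_0 e^{\lambda s}(N(s)-\tfrac{\sigma_s^2}{\sigma_{t_\eps}^2}N(t_\eps))$ has uniformly $O(\eps^2)$ variance and LIL-type fluctuations contributing only a $\sqrt{\log t_\eps}$ factor --- but it would need to be proved to complete your argument. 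Your derivation of~\eqref{eq:equidistribution-test-function} is the same as the paper's (split at $(1-\delta)\eps^\beta$, use the exponential tail of $h$ on the annulus), with the minor caveat that the dominating bound $\eps^\beta f_\eps^c\le1$ for small $\eps$ on the inner region comes from the uniform convergence you just proved, not from the ``crude'' Bayes expression alone, since in the crude bound the $\psi_0(z)$ factors do not cancel for $u$ outside the approximation regime.
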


\begin{proof}
By~\eqref{eq:duhamel} and the Dambis--Dubins--Schwartz theorem (see, e.g., \cite[Section~3.4B]{KS1991}),
\begin{equation}
Z_\eps(t)=\eps e^{\lambda t} \left(z+\sigma_0\int_0^t e^{-\lambda s} dW(s)\right)=\eps e^{\lambda t} \left(z+B(r(t))\right), 
\label{eq:duhamel2}
\end{equation}
where $r(t)=\sigma_0^2\left(1-e^{-2\lambda t}\right)/(2\lambda)$, and $B$ is an auxiliary standard Brownian motion. 
Since
\[
r(t_\eps)=
\sigma_0^2\frac{1-\eps^{2(\alpha-\beta)}e^{2\lambda C_\e}}{2\lambda},
\]
we have
\begin{align*}
Z_\eps(t_\eps)=\eps^{1-\alpha+\beta}e^{-\lambda C_\e}\left(z+B\left(\sigma_0^2\frac{1-\eps^{2(\alpha-\beta)}e^{2\lambda C_\e}}{2\lambda}\right)\right).
\end{align*}
This 
is a Gaussian random variable. Its density at a point $u\in\R$ is given by
\[
 p_\eps(u)=\frac{\sqrt{\lambda}e^{\lambda C_\eps} }{\sqrt{\pi}\sigma_0 \eps^{1-\alpha+\beta}(1+o(1))}\exp\left\{-\frac{(u\eps^{\alpha-\beta-1} e^{\lambda C_\eps}-z)^2}{\frac{\sigma^2_0}{\lambda}(1+o(1))}\right\}.
\]
If $|u|\le\eps^\beta$, then $|u\eps^{\alpha-1-\beta}|\le \eps^{\alpha-1}$, so
\begin{equation}
  p_\eps(u)=\eps^{\alpha-\beta-1} e^{\lambda C} \psi_0(z)(1+o(1)),
\label{eq:Gaussian-density-asymptotics} 
\end{equation}
uniformly over $u$ and $z$ satisfying $|u|\le\eps^\beta$, $|z|\leq K(\e)$.

Therefore,~\eqref{eq:equidistribution} will follow from
 \begin{equation}
\label{eq:cond-density-asymp-uniform3}
\lim_{\e\downarrow 0}\sup_{|u|\leq (1-\delta)\e^\beta}\sup_{|z|\leq K(\e)}\left|\frac{f_\eps(z,u)}{\eps^{\alpha-\beta-1}e^{\lambda C}\psi_0(z)}-1\right|=0,
\end{equation}
where $f_\eps(z,u)$ is the sub-probability density of $Z_\e(t_\e)$ on the event $\{\tau_\e>t_\e\}$.  For this, it suffices to see that
\begin{equation}
\label{eq:influence_of_boundary3}
\sup_{|u|\leq (1-\delta)\e^\beta}\sup_{|z|\leq K(\e)}\frac{g_\eps(z,u)}{\psi_0(z)},
\end{equation}
decays exponentially fast as $\eps\downarrow 0$, where $g_\eps(z,u)$ is the sub-probability density of $Z_\eps(t_\eps)$ on the event $\{\tau_\eps \le t_\eps\}$.
Given that $Z_\eps(\tau_\e)=\e^{\beta}(1+\delta_\e)$  (similarly for $-\e^{\beta}(1+\delta_\e)$), we have
\[
Z_\eps(t_\eps)=e^{\lambda (t_\eps-\tau_\e)}\left(\e^{\beta}(1+\delta_\e)+\eps \sigma_0 \int_0^{t_\eps-\tau_\e}e^{-\lambda s}dW(s)\right).
\]
and thus
\begin{equation}
\label{eq:computing_h3}
g_\eps(z,u)=\int_0^{t_\eps}\Pp\{\tau_\eps\in [t,t+dt)\} 
G(\eps,t, u), 
\end{equation}
where
\begin{align*}
G(\eps,t,u)&=
\psi\left(\eps^2\sigma_0^2\frac{w^2-1}{2\lambda}, w\e^{\beta}(1+\delta_\e)-u\right)
= \frac{1}{\sqrt{2\pi \eps^2\sigma_0^2\frac{w^2-1}{2\lambda}}}e^{-\frac{\lambda(w\e^{\beta}(1+\delta_\e)-u)^2}{\eps^2\sigma_0^2(w^2-1)}},
\end{align*}
with $\psi(\cdot,\cdot)$ that was introduced in~\eqref{eq:Gaussian-density} and $w=e^{\lambda(t_\eps-t)}$.

We claim that there is $c>0$ such that
\begin{equation}
\label{eq:denisty-of-coming-back-from-the-boundary3}
\sup\{G(\eps,t,u):\ 0\le t\le t_\eps,\ |u|\le(1-\delta) \e^{\beta} \}=\mathcal{O}\left(e^{-c/\eps^{2(1-\beta)}}\right). 
\end{equation}
 This, along with \eqref{eq:computing_h3} and the fact that $\psi_0(z)>\lambda^{1/2}\pi^{-1/2}\sigma_0^{-1}e^{-\lambda K^2(\eps)/\sigma_0^2}$ for all
$\eps$ and $z$ satisfying $|z|<K(\eps)$, will imply the desired exponential decay in~\eqref{eq:influence_of_boundary3}.

Let us fix any $w_0$ and find $c_0>0$ such that 
\[
\lambda (w\e^{\beta}(1+\delta_\e)-(1-\delta)\e^\beta  )^2/(\sigma_0^2(w^2-1))\ge c_0\e^{2\beta}
\]
for $w> w_0$ and all $\eps>0$. Then  there is a constant $c_0$ such that for all $|u|\le (1-\delta)\eps^\beta$ and  $\eps>0$,
\begin{equation}
\label{eq:for_large_z3}
G(\eps,t,u)\le  \frac{1}{\sqrt{2\pi \eps^2\sigma_0^2\frac{w_0^2-1}{2\lambda}}}e^{-c_0/\eps^{2(1-\beta)}},\quad w>w_0.
\end{equation}
If $1 \le w \le w_0$, then $(w\e^{\beta}(1+\delta_\e)-u)^2\ge\Delta:= \e^{2\beta}\delta^2$. So,
denoting $D=\eps^2\sigma_0^2(w^2-1)/(2\lambda)$, we obtain
\begin{equation}
\label{eq:for_small_z__13}
G(\eps,t,u)\le \frac{1}{\sqrt{2\pi D}}e^{-\frac{\Delta}{2D}}=\psi(D, \Delta),\quad 1\le w\le w_0.
\end{equation}
The restriction on $w$ implies $0\le D \le c_1\eps^2$ for some $c_1$. To maximize $\psi(D,\Delta)$,
we compute
\[
\partial_D\ln \psi(D,\Delta)= -\frac{1}{2D} + \frac{\Delta}{2 D^2},
\]
so $\psi(D,\Delta)$ grows in $D\in [0,\Delta]$, and we obtain from \eqref{eq:for_small_z__13}:
\begin{equation}
\label{eq:for_small_z__23}
G(\eps,t,x)\le \frac{1}{\sqrt{2\pi c_1 \eps^2}}e^{-\frac{\Delta}{2c_1\eps^2}}.
\end{equation}
Combining \eqref{eq:for_large_z3} and \eqref{eq:for_small_z__23}, we obtain \eqref{eq:denisty-of-coming-back-from-the-boundary3} and hence \eqref{eq:influence_of_boundary3}.
Thus, \eqref{eq:equidistribution} is proved.
 
To prove \eqref{eq:equidistribution-test-function}, we write
\[
\E\left[h(\e^{-1}Z_{\e}(t_\e))|\tau_\e>t_{\e}\right] = 
\int_{|u|<(1-\delta)\e^{\beta}}h(\e^{-1}u)f_\e^c(u)du + \int_{(1-\delta)\e^{\beta}
\le|u|\le (1+\delta_\eps)\eps^\beta}h(\e^{-1}u)f_\e^c(u)du
\]
and notice that the first term on the right-hand side equals
\begin{multline*}
\int_{-(1-\delta)\e^{\beta}}^{(1-\delta)\e^{\beta}}h(\e^{-1}x)f_\e^c(x)dx=\frac{1}{2\e^{\beta}}(1+o(1))\int_{-(1-\delta)\e^{\beta}}^{(1-\delta)\e^{\beta}}h(\e^{-1}x)dx\\
= \frac{1}{2}\e^{1-\beta}(1+o(1))\int_{-(1-\delta)\e^{-(1-\beta)}}^{(1-\delta)\e^{-(1-\beta)}}h(x)dx=\frac{1}{2}\e^{1-\beta}\int_{-\infty}^{\infty}h(x)dx+o(\e^{1-\beta}),
\end{multline*}
while the second term decays much faster than $\eps^{1-\beta}$ due to the decay assumption on $h$.
\end{proof}

\section{Proof of Theorem \ref{thm:linear} for $\alpha\in(1,1+\beta)$}\label{sec:short-time}
We start by studying the deviations $\Delta_\eps(t)=Y_\eps(t)-Z_\eps(t)$ as long as both processes $Y_\eps(t)$ and $Z_\eps(t)$ are close to the origin. Let us fix $\beta\in(0,1)$ and introduce the stopping times
\[
\tauy_\eps=\inf\{t\ge 0:\ |Y_\eps(t)|=\eps^\beta \},\qquad
\ttau_\eps=\inf\{t\ge 0:\ |Y_\eps(t)|=2\eps^\beta \}.
\]
\begin{lemma}\label{lem:Delta-estimate}
Suppose $\alpha\in(1,1+\beta)$, $\beta'\in(\alpha-1,\beta)$, $L(\e)>0$ is a bounded function,  and
\begin{equation}\label{eq:t-eps-prime}
t_\epsilon'=\frac{\alpha-\beta}{\lambda}\log \left(L(\e)\e^{-1}\right).
\end{equation}
Then for sufficiently small $\eps>0$,
we have
\begin{equation}\label{eq:deviation-prob}
\Pp\left(\sup_{0\le t\le t_\eps' \wedge \ttau_\eps} |\Delta_\eps(t)|>\eps^{\beta+\beta'-(\alpha-1) } \right)\leq 2e^{-\frac{c}{\eps^{2(\beta-\beta')}}}, 
\end{equation}
provided $Y_\e(0)=Z_\e(0)=\eps y$, for every $y\in(-2\eps^\beta, 2\eps^\beta)$. 
\end{lemma}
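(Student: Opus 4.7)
The plan is to subtract the SDEs for $Y_\eps$ and $Z_\eps$, apply Duhamel's formula, and localize the analysis to the event $\{t\le\ttau_\eps\}$, on which Lipschitz continuity of $\tilde\sigma$ provides a quantitative gain. Since $Y_\eps(0)=Z_\eps(0)=\eps y$, we have $\Delta_\eps(0)=0$ and
\begin{equation*}
d\Delta_\eps(t)=\lambda\Delta_\eps(t)\,dt+\eps[\tilde\sigma(Y_\eps(t))-\sigma_0]\,dW(t)+\tfrac{\eps^2}{2}h(Y_\eps(t))\,dt,
\end{equation*}
so Duhamel, exactly as in~\eqref{eq:Y-Duhamel}, yields $\Delta_\eps(t)=e^{\lambda t}(M(t)+D(t))$, where $M(t)=\eps\int_0^t e^{-\lambda s}[\tilde\sigma(Y_\eps(s))-\sigma_0]\,dW(s)$ is a stochastic integral and $D(t)=\tfrac{\eps^2}{2}\int_0^t e^{-\lambda s}h(Y_\eps(s))\,ds$ is an absolutely continuous drift.

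On $\{s\le\ttau_\eps\}$ we have $|Y_\eps(s)|\le 2\eps^\beta$, so Lipschitz continuity of $\tilde\sigma$ gives $|\tilde\sigma(Y_\eps(s))-\sigma_0|\le C\eps^\beta$. Using $\int_0^\infty e^{-2\lambda s}\,ds=1/(2\lambda)$, the stopped martingale $M^{\ttau_\eps}(t)=M(t\wedge\ttau_\eps)$ therefore has quadratic variation bounded by a quantity of order $\eps^{2(1+\beta)}$. Boundedness of $h$ simultaneously gives $|D(t)|=O(\eps^2)$ uniformly in~$t$. Recalling $e^{\lambda t'_\eps}=(L(\eps)\eps^{-1})^{\alpha-\beta}$ and writing
\begin{equation*}
\sup_{t\le t'_\eps\wedge\ttau_\eps}|\Delta_\eps(t)|\le e^{\lambda t'_\eps}\bigl(\sup_t|M^{\ttau_\eps}(t)|+\sup_t|D(t)|\bigr),
\end{equation*}
the deterministic contribution is of order $L^{\alpha-\beta}\eps^{2-\alpha+\beta}$, which is $o(\eps^{1+\beta+\beta'-\alpha})$ since $\beta'<1$; hence for small $\eps$ it lies below half the target threshold and can be discarded. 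For the remainder, I apply Lemma~\ref{lem:exp-marting-ineq} to $M^{\ttau_\eps}$ with $a=\tfrac12\eps^{1+\beta+\beta'-\alpha}e^{-\lambda t'_\eps}$, which simplifies to $a=\tfrac12 L^{-(\alpha-\beta)}\eps^{1+\beta'}$, and with $b$ of order $\eps^{2(1+\beta)}$. A direct computation gives $a^2/(2b)$ of order $\eps^{-2(\beta-\beta')}$ with a constant that is positive because $L(\eps)$ is bounded, producing exactly the claimed bound $2\,e^{-c/\eps^{2(\beta-\beta')}}$.

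There is no genuine obstacle beyond careful bookkeeping of exponents. The Lipschitz gain $\eps^\beta$ in $\tilde\sigma(Y_\eps)-\sigma_0$ enters the martingale variance as $\eps^{2\beta}$, and when amplified by $e^{\lambda t'_\eps}$ of order $\eps^{-(\alpha-\beta)}$ it produces a standard deviation of order $\eps^{1+2\beta-\alpha}$ for $\Delta_\eps(t'_\eps)$. The target $\eps^{1+\beta+\beta'-\alpha}$ exceeds this standard deviation by a factor $\eps^{-(\beta-\beta')}$, and it is precisely this gap, guaranteed by the hypothesis $\beta'<\beta$, that is responsible for the Gaussian-type exponent $-c/\eps^{2(\beta-\beta')}$ in~\eqref{eq:deviation-prob}. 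The role of the stopping at $\ttau_\eps$ is only to keep $Y_\eps$ in the regime where this Lipschitz estimate on $\tilde\sigma(Y_\eps)-\sigma_0$ is available.
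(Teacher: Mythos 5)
Your proof is correct and follows essentially the same strategy as the paper's: apply Duhamel to $\Delta_\eps$, use the Lipschitz bound $|\tilde\sigma(Y_\eps)-\sigma_0|=\mathcal{O}(\eps^\beta)$ on $\{t\le\ttau_\eps\}$ to control the quadratic variation of the stopped martingale, observe that the drift contribution is negligible relative to the threshold, and conclude via Lemma~\ref{lem:exp-marting-ineq} with the exponent $\eps^{-2(\beta-\beta')}$ coming from the mismatch between the $\eps^{1+\beta'}$ target and the $\eps^{1+\beta}$ standard deviation. The only differences are cosmetic (the paper factors out an extra $\eps$, writing $\Delta_\eps=\eps e^{\lambda t}(I^{(1)}_\eps+I^{(2)}_\eps)$, and absorbs the drift by choosing a small constant $c_0$ rather than halving the threshold).
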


\begin{proof}
Recalling~\eqref{eq:Y-Duhamel}, we obtain
\[
\Delta_\eps(t)=\eps e^{\lambda t} \left(I_\eps^{(1)}(t)+I_\eps^{(2)}(t)\right),\qquad
I_\eps^{(1)}(t)=\int_0^t e^{-\lambda s}(\tilde\sigma(Y_\eps(s))-\sigma_0)dW(s),\qquad I^{(2)}_\e=\mathcal{O}(\e).
\]
Clearly, $I_\e^{(1)}$ is a martingale satisfying 
$\langle I_\eps^{(1)}\rangle_t=\mathcal{O}\left(\eps^{2\beta}\right)$ for  $t\le \ttau_\eps$,
so for any $c_0>0$, there is $c>0$ such that
\[
\Pp\left(\sup_{s\le \ttau_\eps}|I_\eps^{(1)}|>c_0\eps^{\beta'}\right)\le 2 e^{-c\frac{\eps^{2\beta'}}{\eps^{2\beta}}}\le 2e^{-c\frac{1}{\eps^{2(\beta-\beta')}}},
\]
by Lemma~\ref{lem:exp-marting-ineq}. Therefore
\[
\Pp\left(\sup_{s\le \ttau_\eps}|I_\eps^{(1)}+I_\eps^{(2)}|>2c_0\eps^{\beta'}\right)\le 2e^{-c\frac{1}{\eps^{2(\beta-\beta')}}},
\]
and on the complementary event we have
\[
\sup_{0\le t\le t'_\eps \wedge \ttau_\eps} |\Delta_\eps(t)|<2c_0 \eps L^{\alpha-\beta}(\eps) \eps^{-(\alpha-\beta)}\eps^{\beta'} < \eps^{\beta+\beta'-(\alpha-1) }
\]
if $c_0$ is chosen sufficiently small, which finishes the proof.
\end{proof}

Based on this approximation result and the calculation for $Z_\e$ in the previous section, the following theorem proves Theorem \ref{thm:linear} for $\alpha$ not too large.

\begin{theorem}\label{thm:Y-exits-with-X}
Let $\alpha\in(1,1+\beta)$ and let $t_\eps$ be as in \eqref{eq:t_eps}. Then there is $c>0$ such that
\begin{equation}
\label{eq:exit-eps-beta-Y-small-alpha}
\lim_{\e\downarrow 0}\sup_{|y|\leq K(\e)}e^{cy^2}\left|\e^{-(\alpha-1)}\Pp\left(\tauy_\e>t_\eps; 
Y_\eps(\tauy_\e)=\pm\e^{\beta}
\right)-e^{\lambda C}\psi_0(y)\right|=0.
\end{equation}
\end{theorem}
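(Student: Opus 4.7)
The plan is to transfer the exit asymptotic from the fully linear process $Z_\eps$ (given by Lemma~\ref{lem:main-beta-theorem-for-linear}) to $Y_\eps$, using the pathwise coupling of Lemma~\ref{lem:Delta-estimate}, together with a strong Markov argument at time $t_\eps$ and the approximate equidistribution of Lemma~\ref{lem:X-equid} to handle the exit direction. I drive both $Y_\eps$ and $Z_\eps$ by the same Brownian motion $W$ and start both at $\eps y$.

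First I would apply Lemma~\ref{lem:Delta-estimate} with $\beta'\in(\alpha-1,\beta)$, $\delta_\eps:=\eps^{\beta'-(\alpha-1)}\downarrow 0$, and a bounded $L(\eps)$ chosen so that $t_\eps'>t_\eps$ in \eqref{eq:t-eps-prime}. On the event $E=\{\sup_{t\leq t_\eps'\wedge\ttau_\eps}|\Delta_\eps(t)|<\delta_\eps\eps^\beta\}$, whose complement has super-polynomially small probability, the two processes agree up to $\delta_\eps\eps^\beta$. Defining $\tau_\eps^{\pm}=\inf\{t\geq 0:|Z_\eps(t)|=\eps^\beta(1\pm\delta_\eps)\}$, the triangle inequality gives the sandwich $\{\tau_\eps^->t_\eps\}\cap E\subseteq\{\tauy_\eps>t_\eps\}\subseteq\{\tau_\eps^+>t_\eps\}\cup E^c$. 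Lemma~\ref{lem:main-beta-theorem-for-linear} applied to $\tau_\eps^{\pm}$ then yields, uniformly with the Gaussian weight,
\[
\Pp(\tauy_\eps>t_\eps)=2\eps^{\alpha-1}e^{\lambda C}\psi_0(y)(1+o(1)).
\]

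To split by direction, the strong Markov property at $t_\eps$ gives
\[
\Pp(\tauy_\eps>t_\eps,\ Y_\eps(\tauy_\eps)=\pm\eps^\beta)=\int_{-\eps^\beta}^{\eps^\beta}q_\pm(u)f_\eps^Y(u)\,du,
\]
where $q_\pm(u):=\Pp_u(Y_\eps\text{ exits through }\pm\eps^\beta)$ and $f_\eps^Y$ is the sub-probability density of $Y_\eps(t_\eps)$ restricted to $\{\tauy_\eps>t_\eps\}$. Lemma~\ref{lem:X-equid} together with the coupling (used to transfer the uniform asymptotics of $f_\eps^Z$ to $f_\eps^Y$ modulo boundary corrections of width $\delta_\eps\eps^\beta$) gives $f_\eps^Y(u)=\eps^{\alpha-1-\beta}e^{\lambda C}\psi_0(y)(1+o(1))$ uniformly for $|u|\leq(1-\delta)\eps^\beta$. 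On the other hand, a Duhamel/martingale computation analogous to the proof of Proposition~\ref{prop:LDPestim}, applied to the process started at $u$, shows that there exists $c_0>0$ such that for any $M(\eps)\to\infty$, $q_+(u)=\mathbf{1}_{u>0}+O(e^{-c_0 M(\eps)^2})$ uniformly for $|u|\geq M(\eps)\eps$.

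Putting the pieces together with, say, $M(\eps)=\log\eps^{-1}$: the bulk contribution $\int_{u>M\eps}q_+f_\eps^Y\,du$ equals $\tfrac12\Pp(\tauy_\eps>t_\eps)(1+o(1))$ by the uniformity of $f_\eps^Y$; the region $u<-M\eps$ contributes $O(e^{-c_0 M^2}\eps^{\alpha-1})=o(\eps^{\alpha-1})$; and the transition zone $|u|\leq M\eps$ contributes at most $2M\eps\cdot\sup f_\eps^Y=O(M\eps^{\alpha-\beta})=o(\eps^{\alpha-1})$, since $\beta<1$. This yields \eqref{eq:exit-eps-beta-Y-small-alpha} with the required uniform Gaussian weight, which is inherited directly from Lemma~\ref{lem:main-beta-theorem-for-linear}. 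The main obstacles I expect are (a) the uniform LDP-type estimate on $q_+(u)$ for $|u|\geq M(\eps)\eps$, which has to be quantitative enough to kill the boundary region where drift and noise are comparable; and (b) propagating the equidistribution of Lemma~\ref{lem:X-equid} cleanly from $Z_\eps$ to $Y_\eps$ while tracking the Gaussian-weighted uniformity in $y$.
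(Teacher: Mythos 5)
Your overall strategy — sandwich the exit time of $Y_\eps$ between exit times of $Z_\eps$ from slightly shrunk/enlarged intervals, apply the strong Markov property at $t_\eps$, handle the exit direction by showing $Y_\eps(t_\eps)$ is approximately equidistributed, and kill the boundary layer with an LDP-type estimate — is essentially the paper's proof. The one step that does not go through as written is the transfer of \emph{density} asymptotics from $Z_\eps$ to $Y_\eps$: you claim that the pathwise coupling $|Y_\eps(t_\eps)-Z_\eps(t_\eps)|<\delta_\eps\eps^\beta$ (on a high-probability event) together with the uniform density statement \eqref{eq:equidistribution} for $Z_\eps$ gives $f_\eps^Y(u)=\eps^{\alpha-1-\beta}e^{\lambda C}\psi_0(y)(1+o(1))$ uniformly. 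This is not justified. Pathwise closeness of two random variables controls their distributions only at the level of CDFs or interval probabilities on scales larger than the coupling error; it gives no pointwise control on densities. In fact, obtaining uniform density estimates for $Y_\eps(t_\eps)$ without Malliavin-calculus machinery is precisely the obstruction that this paper is designed to circumvent, so it is dangerous to assume it as a lemma.

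The good news is that your own calculation never truly requires the density of $Y_\eps(t_\eps)$, only quantities of the form $\int_a^b q_\pm(u)\,f_\eps^Y(u)\,du=\E[q_\pm(Y_\eps(t_\eps));\,a<Y_\eps(t_\eps)<b,\,\tauy_\eps>t_\eps]$, which — once $q_\pm$ is replaced by $\mathbf{1}_{\pm u>0}$ up to the LDP error — reduce to \emph{interval} probabilities $\Pp(a<Y_\eps(t_\eps)<b;\,\tauy_\eps>t_\eps)$. These are exactly the objects that the pathwise coupling controls, provided $b-a$ is large compared with the coupling error $\delta_\eps\eps^\beta$, via
\[
\Pp(a+\delta_\eps\eps^\beta<Z_\eps(t_\eps)<b-\delta_\eps\eps^\beta;\ \tau_\eps^Z>t_\eps)-o_{\exp}(1)\le \Pp(a<Y_\eps(t_\eps)<b;\ \tauy_\eps>t_\eps)\le\cdots
\]
and then Lemma~\ref{lem:X-equid} can be applied on the $Z$ side. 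This is precisely what the paper does: it introduces a threshold $\eps^{\beta''}$ with $\beta''\in(\beta,2\beta-(\alpha-1))$, decomposes the conditional exit-direction probability into the contribution of the ``core'' $\{|Y_\eps(t_\eps)|<\eps^{\beta''}\}$ (negligible, by comparison with $\Pp(|Z_\eps(t_\eps)|<2\eps^{\beta''})$) and the bulk $\{Y_\eps(t_\eps)>\eps^{\beta''}\}$ (tends to $1/2$, by sandwiching against $Z_\eps$ and using \eqref{eq:equidistribution}), together with the direction-determination estimate \eqref{eq:side-switch-unlikely} which plays the role of your $q_+(u)=\mathbf{1}_{u>0}+O(e^{-c_0M^2})$. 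Note also that your transition zone $|u|\le M(\eps)\eps$ with $M(\eps)=\log\eps^{-1}$ can be \emph{smaller} than the coupling error $\delta_\eps\eps^\beta=\eps^{\beta+\beta'-(\alpha-1)}$ (which is $\gg\eps$ whenever $\beta+\beta'-(\alpha-1)<1$); this does not break the argument once you work at the interval level, but it does make ``transfer the density on $|u|\le M\eps$'' meaningless. In short: replace every appeal to $f_\eps^Y$ with the corresponding sandwich of interval probabilities against $Z_\eps$, and your proof coincides with the paper's.
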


\begin{proof}
We start with an upper bound on $\Pp(\tauy_\e>t_\e)$ in terms of $Z_\eps$. Let us fix any
$\beta''\in(\beta,2\beta -(\alpha-1))$, so that $\beta'=\beta'' -\beta+\alpha -1\in(\alpha-1,\beta)$, which will allow us to apply Lemma~\ref{lem:Delta-estimate} several times in this proof. Let us take any family of events $(B_\eps)_{\eps>0}$ and estimate
\begin{multline*}
\Pp(\tauy_\eps>t_\e; B_\eps)=I_1(\eps)+I_2(\eps)
\\=
\Pp\left(\tauy_\eps>t_\e;\sup_{0\leq t\leq t_\e\wedge\ttau_\eps}|\Delta_\e(t)|<\e^{\beta''}; B_\eps\right)+\Pp\left(\tauy_\eps>t_\e;\sup_{0\leq t\leq t_\e\wedge\ttau_\eps}|\Delta_\e(t)|\geq\e^{\beta''}; B_\eps\right).
\end{multline*}
Note that $t_\e$ is of the form \eqref{eq:t-eps-prime} with $L(\eps)=e^{-\frac{(\alpha-\beta)C_\eps}{\lambda}}$ and thus \eqref{eq:deviation-prob} implies
\begin{equation}\label{eq:secterm2}
I_2(\eps)=o_{\exp}(1),
\end{equation}
where, for any $\gamma>0$, we use $o_{\exp}(1)$ as a shorthand for $o(e^{-\eps^{-\gamma}})$ . Also
\[
I_1(\eps)=\Pp\left(\tauy_\eps>t_\e;\sup_{0\leq t\leq t_\e}|\Delta_\e(t)|<\e^{\beta''}; B_\eps\right).
\]
We need to approximate this in terms of the exit time of $Z_\eps$ instead of~$\tauy_\eps$. We do not have control over the difference of these two times in general as we can only control the difference of the processes until $t_\eps$. Instead, we are going to set a different threshold for $Z_\eps$ to reach. Let
$\gamma\in (\beta,\beta'')$, $l_\e^1=\e^{\beta}$ and $l_\e^2=\l_\e^1+\eps^\gamma$.
This implies
\[
\Pp\left(\tauy_\eps>t_\e;\tau^Z_\e \leq t_\e; \sup_{0\leq t\leq t_\e}|\Delta_\e(t)|\leq\e^{\beta''}; B_\eps\right)=0,
\]
where $\tau^Z_\eps$ is the exit time from $[-l_\e^2,l_\e^2]$ for $Z_\eps$,
and thus
\[
I_1(\eps)
=\Pp\left(\tauy_\eps>t_\e;\tau^Z_\e >t_\e; \sup_{0\leq t\leq t_\e}|\Delta_\e(t)|\leq\e^{\beta''}; B_\eps\right)
\leq \Pp\left(\tau^Z_\eps>t_\e; B_\eps\right).
\]
Combining this with \eqref{eq:secterm2}, we obtain
\begin{equation}\label{eq:upper_bd2}
\Pp\left(\tauy_\eps>t_\e; B_\eps\right)\leq\Pp\left(\tau_\e^Z>t_\e; B_\eps\right)+o_{\exp}(1).
\end{equation}
Next, we set $l_\e^3=l_\e^{1}-\e^{\gamma}$ and define $\eta_\e^Z$ to be the exit time of $Z_\eps$ from $[-l_\e^3,l_\e^3]$. Lemma~\ref{lem:Delta-estimate} and the fact that $\{\tauy_\e\leq t_\e;\ \sup_{0\leq t\leq t_\e\wedge\ttau_\e}|\Delta_\e(t)|\leq\e^{\beta''}\}\subset \{\eta_\e^Z\le t_\e\}$, imply
\begin{align}\label{eq:lower_bd2}
\Pp(\tauy_\e>t_\e; B_\eps)&\geq\Pp\left(\tauy_\eps>t_\e;\sup_{0\leq t\leq t_\e\wedge\ttau_\e}|\Delta_\e(t)|\leq\e^{\beta''}; B_\eps\right) \\
\notag&=\Pp(B_\eps)+o_{\exp}(1)-\Pp\left(\tauy_\eps\leq t_\e;\sup_{0\leq t\leq t_\e\wedge\ttau_\e}|\Delta_\e(t)|\leq\e^{\beta''}; B_\eps\right)\\
\notag&\geq \Pp(B_\eps)+o_{\exp}(1)-\Pp\left(\eta_\e^Z\leq t_\e; B_\eps\right)=\Pp\left(\eta_\e^Z>t_\e; B_\eps\right)+o_{\exp}(1).
\end{align}
Combining \eqref{eq:upper_bd2} and \eqref{eq:lower_bd2}, we obtain
\begin{equation}\label{eq:stopping-time-sandwich}
\Pp(\eta_\e^Z>t_\e; B_\eps)-o_{\exp}(1)\leq\ \Pp(\tauy_\e>t_\e; B_\eps)\ \leq \Pp(\tau_\e^Z>t_\e; B_\eps)+o_{\exp}(1).
\end{equation}
This,  Lemma~\ref{lem:main-beta-theorem-for-linear}, and our choice of $l_\e^i$, $i=1,2,3$ imply
\begin{equation}
\label{eq:exit-eps-beta-Y-small-alpha-no-direction}
\lim_{\e\downarrow 0}\sup_{|y|\leq K(\e)}e^{cy^2}\left|\e^{-(\alpha-1)}\Pp\left(\tauy_\e>t_\eps
\right)-2e^{\lambda C}\psi_0(y)\right|=0.
\end{equation}

\medskip

To finish the proof, we will need
\begin{equation}\label{eq:side-switch-unlikely}
\lim_{\e\downarrow 0}\sup_{\e^{\beta''}\le |u| \le \e^{\beta}}\left|\Pp\left(Y_\eps(\tauy^{\e})=\e^{\beta}|Y_\e(t_\e)=u\right)-\ONE_{u>0}\right|=0,
\end{equation}
which holds since, due to \eqref{eq:Y-Duhamel}, $Y_\eps(\tauy_\e)=\e^{\beta}$ is equivalent to
$\e^{-1}u+U_\e(\tauy_\e)+\e V_\e(\tauy_\e)>0,$ and so 
\[
\left|\Pp\left(Y_\eps(\tauy^{\e})=\e^{\beta}|Y_\e(t_\e)=u\right)-\ONE_{u>0}\right|\leq\Pp\left(|U_\e(\tauy_\e)|\geq \e^{-(1-\beta'')}-\e V(\tauy_\e)\right)\to 0,\quad  \e\downarrow 0, 
\]
due to the boundedness of $V(\tauy_\e)$ and  \eqref{eq:exp-martingale-ineq}. 

Using \eqref{eq:side-switch-unlikely}, we can write
\begin{align}
&\Pp\left(Y(\tauy_\e)=\e^{\beta}|\ \tauy_\e>t_\eps\right)=\E\left[\Pp\left(Y_\eps(\tauy^{\e})=\e^{\beta}|Y_\e(t_\e)\right)|\ \tauy_\e>t_\e\right] \notag
\\
=&\E\left[\Pp\left(Y_\eps(\tauy^{\e})=\e^{\beta}|Y_\e(t_\e)\right);|Y_\e(t_\e)|<\e^{\beta''}\, \big|\, \tauy_\e>t_\e\right]+\Pp\left(Y_\e(t_\e)>\e^{\beta''}|\ \tauy_\e>t_\e\right) + o(1)\notag
\\=&A_1+A_2+o(1).
\label{eq:strong-Markov-formula-decomp}
\end{align}
Due to~\eqref{eq:deviation-prob}, 
\begin{align}\label{eq:small-Y-conditioned}
A_1&=\Pp\left(|Y_\e(t_\e)|<\e^{\beta''};\sup_{0\leq t\leq t_\e}|\Delta_\e(t)|\leq\e^{\beta''}|\ \tauy_\e>t_\e\right)+o(1)
\notag\\&\leq \Pp\left(|Z_\e(t_\e)|<2\e^{\beta''}|\ \tauy_\e>t_\e\right)+o(1)\to 0.
\end{align}
In the last convergence, we used \eqref{eq:Gaussian-density-asymptotics} to compute
\[\Pp(|Z_\e(t_\e)|<2\e^{\beta''})=2\eps^{\beta''}\eps^{1-\alpha+\beta}e^{\lambda C}\psi_0(y)(1+o(1)),\]
and we used \eqref{eq:stopping-time-sandwich} with $B_\eps\equiv \Omega$, along with Lemma~\ref{lem:main-beta-theorem-for-linear} to compute
$\Pp(\tauy_\eps>t_\eps)=2 e^{\lambda C}\psi_0(y)\eps^{\alpha-1}(1+o(1))$, so $\Pp(|Z_\e(t_\e)|<2\e^{\beta''})/\Pp(\tauy_\eps>t_\eps)\to 0$  follows from our assumptions on $\alpha,\beta,\beta''$.

Also due to~\eqref{eq:deviation-prob},
\[
\Pp\left(Z_\e(t_\e)>2\e^{\beta''}|\tauy_\e>t_\e\right)+o(1)\leq\ A_2\ \leq \Pp\left(Z_\e(t_\e)>0|\tauy_\e>t_\e\right)+o(1).
\]

Using \eqref{eq:stopping-time-sandwich} with $B_\eps=\{Z_\e(t_\e)>2\e^{\beta''}\}$, $B_\eps=\{Z_\e(t_\e)>0\}$, and $B_\eps=\Omega$, we can switch conditioning to that in terms of $Z_\eps$:
\[
\Pp\left(Z_\e(t_\e)>2\e^{\beta''}|\eta_\e^Z>t_\e\right)+o(1)\leq\ A_2\ \leq \Pp\left(Z_\e(t_\e)>0|\tau_\e^Z>t_\e\right)+o(1),
\]
where both the left and the right hand side converge to $1/2$ as $\e\downarrow 0$ due to Lemma \ref{lem:X-equid}.
Combining this with \eqref{eq:strong-Markov-formula-decomp}, \eqref{eq:small-Y-conditioned}, noticing that all the $o(1)$ terms in these estimates are independent of the starting point $y$, and using
\eqref{eq:exit-eps-beta-Y-small-alpha-no-direction}
we obtain~\eqref{eq:exit-eps-beta-Y-small-alpha}, which completes the proof.
\end{proof}

\section{Extension to arbitrary timescales}\label{sec:extension}
The goal of this section is to extend Theorem \ref{thm:Y-exits-with-X} for arbitrary $\alpha>1$ and thus prove Theorem~\ref{thm:linear}. We set $\theta=\alpha-\beta$, $L(\e)=e^{-\frac{\lambda}{\theta} C_\e}$, and
\[
t_\e=\frac{\theta}{\lambda}\log\left(L(\e)\e^{-1}\right)=\frac{\theta}{\lambda}\log \eps^{-1}-C_\eps.
\]
 When $\theta\in(1-\beta,1)$, Theorem~\ref{thm:Y-exits-with-X} applies and there is nothing new to prove. Here we study the case $\theta\geq 1$.  Up to this point the only restriction on $\beta$ was $\beta\in(0,1)$. Let us now set $N=[\theta]+1\ge 2$, $\beta_0=\frac{1}{2}\left(1+\frac{\theta}{N}\right)$
 and assume 
$\beta\in (\beta_0,1)$ throughout this section.  We have 
\begin{equation}
\label{eq:choosing-number-of-intervals}
\theta<N<\frac{\theta}{1-\beta}.
\end{equation}
 We also define $t_\e'=t_\e/N$ and $t_{\e,k} = kt_\e',$ $k=0,1,\dots, N$.
Our plan is to track $Y_{\e,k}=Y_{\e}(t_{\e,k})$, $k=0,1,\dots, N$, using the results of the previous section on the short intervals $[t_{\e,k},t_{\e,k+1}]$.

 The first step is the following lemma which establishes that the process needs to stay close to the origin to delay the exit. 

\begin{lemma}\label{lem:small_channel}
We have
\begin{equation}\label{eq:small-gate}
\max_{k=0,\ldots,N-1}\Pp\left(\sup_{t\leq t_{\e,k}}|Y_\e(t)|>\e K(\e);\ \tauy_\eps>t_{\e,k+1}\right)\leq 
C_1e^{-C_2K^2(\e)}
\end{equation}
for some $C_1,C_2>0$. In particular,  
\[
\max_{k=0,\dots,N-1}\Pp\left(\max_{u=0,\dots,k}|Y_{\e,u}|>\e K(\e);\ \tauy_\eps>t_{\e,k+1}\right)\leq C_1e^{-C_2K^2(\e)}.
\]
\end{lemma}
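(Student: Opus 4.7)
The plan is a direct drift-amplification argument: if $|Y_\e|$ ever reaches the level $\eps K(\e)$ at some time $\sigma \le t_{\e,k}$, then over the remaining window of length at least $t_\e'=t_\e/N$ the linear drift alone amplifies this displacement by the factor $e^{\lambda t_\e'}=\eps^{-\theta/N}e^{-\lambda C_\e/N}$, producing a value of order $\eps^{1-\theta/N}K(\e)$ that comfortably exceeds $\eps^\beta$. Indeed, our standing assumption $\beta>\beta_0=(1+\theta/N)/2$ combined with $N=[\theta]+1\ge 2$, $\theta/N\ge(N-1)/N\ge 1/2$, yields $\beta_0>1-\theta/N$ and hence $\beta+\theta/N>1$. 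For the event $\{\tauy_\e>t_{\e,k+1}\}$ to occur, the noise must then cancel a deterministic displacement of order $\eps K(\e)$, which by the exponential martingale inequality costs at most $C_1 e^{-C_2 K^2(\e)}$ in probability.

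To implement this, fix $k\in\{0,\ldots,N-1\}$ and set $\sigma=\inf\{t\ge 0:|Y_\e(t)|\ge\eps K(\e)\}$. On the event in~\eqref{eq:small-gate} we have $\sigma\le t_{\e,k}$ and $|Y_\e(\sigma)|=\eps K(\e)$ by path continuity; by symmetry assume $Y_\e(\sigma)=+\eps K(\e)$. The strong Markov property at $\sigma$ together with the Duhamel formula~\eqref{eq:Y-Duhamel} applied to the shifted process yields
\[
Y_\e(\sigma+\tau)=e^{\lambda\tau}\bigl(\eps K(\e)+\eps\tilde U_\e(\tau)+\eps^2\tilde V_\e(\tau)\bigr),
\]
where $\tilde U_\e,\tilde V_\e$ are defined exactly as $U_\e,V_\e$ but driven by the shifted Brownian motion $W(\sigma+\cdot)-W(\sigma)$ and the shifted path of $Y_\e$. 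Because $\tilde\sigma$ and $h$ are globally bounded, the bounds~\eqref{eq:boundedness_of_V_e} and~\eqref{eq:exp-martingale-ineq} continue to hold verbatim for $\tilde V_\e$ and $\tilde U_\e$, with the same constants.

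On $\{\tauy_\e>t_{\e,k+1}\}$ we have $|Y_\e(t_{\e,k+1})|\le\eps^\beta$, so with $\tau=t_{\e,k+1}-\sigma\ge t_\e'$ the identity above rearranges via the triangle inequality to
\[
\eps|\tilde U_\e(\tau)|\ge \eps K(\e)-\eps^\beta e^{-\lambda\tau}-\eps^2\|\tilde V_\e\|_\infty \ge \eps K(\e)-\eps^{\beta+\theta/N}e^{\lambda C_\e/N}-O(\eps^2).
\]
Since $\beta+\theta/N>1$, the last two terms are $o(\eps K(\e))$, whence $|\tilde U_\e(\tau)|\ge K(\e)/2$ for all sufficiently small $\e$. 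Lemma~\ref{lem:exp-marting-ineq} applied to $\tilde U_\e$ (with $\langle\tilde U_\e\rangle_\infty\le\|\tilde\sigma\|_\infty^2/(2\lambda)$) produces a bound of the form $C_1 e^{-C_2 K^2(\e)}$; since these constants do not depend on $k$, the same bound survives taking the $\max$ over the fixed finite range $k=0,\ldots,N-1$. The second assertion of the lemma is immediate from the inclusion $\{\max_{u\le k}|Y_{\e,u}|>\eps K(\e)\}\subseteq\{\sup_{t\le t_{\e,k}}|Y_\e(t)|>\eps K(\e)\}$.

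The only delicate point is that the length $\tau=t_{\e,k+1}-\sigma$ of the window over which $\tilde U_\e$ must be atypically large is itself random; this is handled cleanly because the supremum form of Lemma~\ref{lem:exp-marting-ineq} controls $\sup_{\tau\ge 0}|\tilde U_\e(\tau)|$ uniformly, using only the a.s.\ boundedness of $\langle \tilde U_\e\rangle_\infty$ coming from $\|\tilde\sigma\|_\infty<\infty$.
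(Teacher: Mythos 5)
Your proof is correct and takes essentially the same route as the paper's: apply the strong Markov property at the first hitting time of $\pm\eps K(\eps)$, use the Duhamel representation~\eqref{eq:Y-Duhamel} and the boundedness of $V_\eps$ for the shifted process, observe that on $\{\tauy_\e>t_{\e,k+1}\}$ the drift amplification over a window of length at least $t_\e'$ forces $|\tilde U_\e|$ to exceed roughly $K(\eps)$ (since $\beta+\theta/N>1$ guarantees the level $\eps^\beta e^{-\lambda t_\e'}$ is negligible compared to $\eps K(\eps)$), and invoke the exponential martingale bound~\eqref{eq:exp-martingale-ineq}. Your derivation of $\beta+\theta/N>1$ goes through $\beta_0>1-\theta/N$ rather than reading it off directly from $N<\theta/(1-\beta)$ in~\eqref{eq:choosing-number-of-intervals}, but both are valid.
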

\begin{proof} Using the strong Markov property and applying Duhamel's principle \eqref{eq:Y-Duhamel} \and \eqref{eq:boundedness_of_V_e}
to the initial condition $y$ with $|y|>\eps K(\eps)$, we reduce the lemma to 
the estimate
\begin{multline*}
\Pp\left(e^{\lambda t'_\eps}\inf_{t\le t_\eps}\left|y+\eps U_\eps(t)+\eps^2 V_\eps(t) \right|<\eps^\beta\right)
\\ \le \Pp\left(\sup_{t\le t_\eps}|U_\eps(t)|> K(\eps)-\eps^{\beta+\frac{\theta}{N}-1- \frac{\theta}{N}\log(L(\eps))}-\eps \frac{\|h\|_\infty}{2\lambda}\right),
\end{multline*}
and the desired inequality follows  by \eqref{eq:exp-martingale-ineq}
since $\beta+\frac{\theta}{N}-1>0$ due to \eqref{eq:choosing-number-of-intervals}.
\end{proof}

We now collect some results needed for our iteration scheme.
\begin{lemma}\label{lem:no-exit-long}
Let $Y_{\e}(0) = \e y$.
Then there is $c>0$ such that
\begin{equation}\label{eq:tau-result-y}
\lim_{\e\downarrow 0}\sup_{|y|\le K(\e)}e^{cy^2}\left|\e^{-\left(\frac{\theta}{N}+\beta-1\right)}\Pp\left(Y_\eps(\tauy_\e)=\pm\e^{\beta};\tauy_\e>t_\e'\right)-e^{\frac{\lambda C}{N}}\psi_0(y)\right|=0.
\end{equation}
Moreover, for any Lipschitz function $h$ on $\R$, exponentially decaying at $\infty$, we have
\begin{equation}\label{eq:h-result-for-Y}
\lim_{\e\downarrow 0}\sup_{|y|\leq K(\e)}e^{cy^2}\left|\e^{-\frac{\theta}{N}}\E\left[h(\e^{-1}Y_{\e,1});\tauy_\e>t_\e'\right]-e^{\frac{\lambda C}{N}}\psi_0(y) \int_{-\infty}^{\infty} h(y) dy\right| =0.
\end{equation}
\end{lemma}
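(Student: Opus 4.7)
The plan is to handle \eqref{eq:tau-result-y} and \eqref{eq:h-result-for-Y} separately; both reduce to results already established for the linear auxiliary process $Z_\eps$.

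For \eqref{eq:tau-result-y}, I would simply invoke Theorem~\ref{thm:Y-exits-with-X}. Setting $\alpha=\beta+\theta/N$ and replacing $C$ and $c(\eps)$ in~\eqref{eq:t_eps} by $C/N$ and $c(\eps)/N$ produces exactly the time $t_\eps'$ of this section. By~\eqref{eq:choosing-number-of-intervals}, $\theta/N\in(1-\beta,1)$, so $\alpha\in(1,1+\beta)$ lies in the admissible range of Theorem~\ref{thm:Y-exits-with-X}. Since $\alpha-1=\theta/N+\beta-1$, both the power of $\eps$ and the prefactor $e^{\lambda C/N}\psi_0(y)$ in~\eqref{eq:tau-result-y} are exactly those that Theorem~\ref{thm:Y-exits-with-X} delivers.

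For \eqref{eq:h-result-for-Y}, my plan is to couple $Y_\eps$ with the linear process $Z_\eps$ from Section~\ref{sec:linear-additive} starting at the common point $\eps y$ and driven by the same Brownian motion $W$, and then transfer the expectation to $Z_\eps$, where the sharp density control provided by Lemmas~\ref{lem:main-beta-theorem-for-linear} and~\ref{lem:X-equid} takes over. The first step is to exploit the standing assumption $\beta>\beta_0$, which gives $\theta/N<2\beta-1$ and hence opens up the window $(2-\beta,1+\beta-\theta/N)$. I would choose $\beta''$ inside this window and a matching $\beta'\in(1+\theta/N-\beta,\beta)$, so that Lemma~\ref{lem:Delta-estimate} applies with $\alpha=\beta+\theta/N$ and yields $|Y_\eps(t_\eps')-Z_\eps(t_\eps')|<\eps^{\beta''}$ on $\{\tauy_\eps>t_\eps'\}$ outside an $o_{\exp}(1)$ event. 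The Lipschitz bound on $h$ then gives a pointwise error $O(\eps^{\beta''-1})$; multiplying by $\Pp(\tauy_\eps>t_\eps')=O(\eps^{\theta/N+\beta-1})$ produces an additive error $O(\eps^{\beta''+\theta/N+\beta-2})=o(\eps^{\theta/N})$, since $\beta''>2-\beta$.

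Having replaced $h(\eps^{-1}Y_\eps(t_\eps'))$ by $h(\eps^{-1}Z_\eps(t_\eps'))$, I would split $h=h^+-h^-$, write each piece as $\int_0^\infty\Pp(h^\pm(\eps^{-1}Z_\eps(t_\eps'))>u;\,\cdot\,)\,du$, and apply~\eqref{eq:stopping-time-sandwich} with $B_\eps=\{h^\pm(\eps^{-1}Z_\eps(t_\eps'))>u\}$, followed by integration in $u$ and boundedness of $h$, to obtain
\[
\E\bigl[h(\eps^{-1}Z_\eps(t_\eps'));\eta_\eps^Z>t_\eps'\bigr]-o_{\exp}(1)\le\E\bigl[h(\eps^{-1}Z_\eps(t_\eps'));\tauy_\eps>t_\eps'\bigr]\le\E\bigl[h(\eps^{-1}Z_\eps(t_\eps'));\tau_\eps^Z>t_\eps'\bigr]+o_{\exp}(1).
\]
For each extreme term I would combine~\eqref{eq:equidistribution-test-function}, applied with the modified boundaries $\pm l_\eps^2$ (resp.\ $\pm l_\eps^3$) which are both of the form $\eps^\beta(1+o(1))$, with Lemma~\ref{lem:main-beta-theorem-for-linear}: multiplying $\E[h(\eps^{-1}Z_\eps(t_\eps'))\mid\tau_\eps^Z>t_\eps']\sim\tfrac12\eps^{1-\beta}\int h$ by $\Pp(\tau_\eps^Z>t_\eps')\sim 2e^{\lambda C/N}\psi_0(y)\eps^{\theta/N+\beta-1}$ collapses to $e^{\lambda C/N}\psi_0(y)\eps^{\theta/N}\int h$, and the same holds for $\eta_\eps^Z$; sandwiching yields~\eqref{eq:h-result-for-Y}. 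The main bookkeeping hurdle will be propagating the $e^{cy^2}$-weighted uniformity through the multiplication, since Lemma~\ref{lem:X-equid} carries only the plain $\sup_{|z|\le K(\eps)}$ uniformity while Lemma~\ref{lem:main-beta-theorem-for-linear} carries the Gaussian weight; fixing $c>0$ strictly smaller than the constant from Lemma~\ref{lem:main-beta-theorem-for-linear} and also smaller than $\lambda/\sigma_0^2$ keeps $e^{cy^2}\psi_0(y)$ uniformly bounded, and the cross error terms generated by the product of the two asymptotic expansions are then of lower order uniformly in $|y|\le K(\eps)$.
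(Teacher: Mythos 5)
Your proposal is correct and follows essentially the same two-step strategy as the paper: invoke Theorem~\ref{thm:Y-exits-with-X} with $\alpha=\beta+\theta/N$ and $C/N$ for \eqref{eq:tau-result-y}, and then transfer the expectation in \eqref{eq:h-result-for-Y} from $Y_\eps$ to the linear process $Z_\eps$ and use Lemmas~\ref{lem:main-beta-theorem-for-linear} and~\ref{lem:X-equid}. Two points of comparison are worth flagging. First, for the $Y$-to-$Z$ transfer the paper explicitly states that \eqref{eq:deviation-prob} is ``insufficient when applied directly'' and instead rewrites the discrepancy as $\eps^{-1}Y_{\eps,1}-\eps^{-1}Z_{\eps,1}=\eps^{-\theta/N}I_\eps(t_\eps')$ and bounds $\sup|I_\eps|$ by $\eps^{\beta'}$ via the exponential martingale inequality. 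Your route applies Lemma~\ref{lem:Delta-estimate} directly but with $\beta''$ pushed into the window $(2-\beta,1+\beta-\theta/N)$; the resulting bound $\eps^{\beta''}=\eps^{\beta'+1-\theta/N}$ is \emph{identical} to the paper's $\eps^{1-\theta/N+\beta'}$, so the two presentations are really the same martingale deviation estimate in different normalizations, and the ``insufficient'' remark appears to refer only to a naive choice of $\beta''$ (as in the proof of Theorem~\ref{thm:Y-exits-with-X}), not to the lemma itself. Second, you make explicit, via the layer-cake split $h=h^+-h^-$ and \eqref{eq:stopping-time-sandwich}, the step that the paper leaves implicit: passing from the $Y$-conditioning $\{\tauy_\eps>t_\eps'\}$ to the $Z$-conditioning needed to apply \eqref{eq:equidistribution-test-function}. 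One small correction there: sandwiching the positive and negative parts separately gives bounds with $\eta_\eps^Z$ and $\tau_\eps^Z$ on \emph{opposite} sides, so you should also note that $\Pp(\eta_\eps^Z\le t_\eps'<\tau_\eps^Z)=o_{\exp}(1)$ (which follows from \eqref{eq:stopping-time-sandwich} with $B_\eps=\Omega$) together with boundedness of $h$, to collapse the mixed bounds into the clean two-sided estimate you wrote. Finally, your use of the boundary $l_\eps^3=\eps^\beta-\eps^\gamma$ requires Lemma~\ref{lem:X-equid} with a negative $\delta_\eps$; this is fine since the proof never uses the sign of $\delta_\eps$, but it deviates from the lemma as literally stated (the paper implicitly relies on the same extension).
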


\begin{proof}
The first claim follows from Theorem~\ref{thm:Y-exits-with-X} with $C/N$ in place of $C$ and $\alpha=\beta+\theta/N$. Note that this value of $\alpha$ belongs to $(1,1+\beta)$ due to~\eqref{eq:choosing-number-of-intervals}.

The second claim is a direct consequence of the first one and 
\begin{equation}\label{eq:Yh}
\lim_{\e\downarrow 0}\sup_{|y|\leq K(\e)}\left|\e^{-(1-\beta)}\E\left[h(\e^{-1}Y_{\e,1})|\, \tauy_\e>t_\e'\right]- \frac{1}{2}\int_{-\infty}^{\infty} h(y)dy\right|=0.
\end{equation}
Once again, to prove this, we would like to use the result for the linear process. However, the estimate \eqref{eq:deviation-prob} is insufficient when applied directly. 
Instead, let us note that 
\begin{equation*}
\e^{-1}Y_{\e,1}-\e^{-1}Z_{\e,1}=e^{\lambda t_\e'}\int_0^{t_\e'}e^{-\lambda t}\left(\sigma(Y_\e(t))-\sigma_0\right)dW(t)=\e^{-\theta/N}I_{\e}(t_\e'),
\end{equation*}
where $I_{\e}(t_\e')=e^{-\lambda C_\eps/N}\int_0^{t_\e'}e^{-\lambda t}\left(\sigma(Y_\e(t))-\sigma_0\right)dW(t)$ and choose any 
\[
\beta'\in\left(\frac{1}{2}\left(1+\frac{\theta}{2}\right),\beta\right).
\]
The exponential martingale inequality (Lemma~\ref{lem:exp-marting-ineq})  and the Lipschitz continuity of $\sigma$ imply
\[
\Pp\left(\sup_{s\le \ttau_\eps}|I_\eps|>\eps^{\beta'}\right)= o_{\exp}(1),
\]
and thus
\[
\Pp\left(\e^{-(1-\beta)}\left|\e^{-1}Y_{\e,1}-\e^{-1}X_{\e,1}\right|>\e^{\beta'-\theta/N-1 +\beta}|\, \tauy_\e>t_\e'\right)= o_{\exp}(1).
\]
Note that $\beta'-\theta/N-1+\beta>0$ due to the choice of $\beta$ and $\beta'$.
Now we can use the last display and the Lipschitz continuity of $h$ to obtain
\[
\e^{-(1-\beta)}\E\left[\left|h(\e^{-1}Y_{\e,1})-h(\e^{-1}Z_{\e,1})\right||\tauy_\e>t_\e'\right]\leq \|h\|_{Lip}\e^{\beta'-\theta/N-1 +\beta} + o_{\exp}(1)\to 0
\]
as $\e\downarrow 0$. This and \eqref{eq:equidistribution-test-function} imply~\eqref{eq:Yh}, which completes the proof of the lemma.
\end{proof}

Finally, the next Theorem implies Theorem \ref{thm:linear} with $\beta_0=\frac{1}{2}\left(1+\frac{\theta}{N}\right)$.

\begin{theorem}
There is $c>0$ such that
\begin{equation}\label{eq:last-result-time}
\lim_{\e\downarrow 0}\sup_{|y|\leq K(\e)}  e^{cy^2} \left|\e^{-(\alpha-1)}\Pp\left(\tauy_\e>t_\eps\right)-2e^{\lambda C}\psi_0(y)\right|=0,
\end{equation}
and
\begin{equation}\label{eq:last-result-direction}
\lim_{\e\downarrow 0}\sup_{|y|\leq K(\e)}\left|\Pp\left(Y_\e\left(\tauy_\e\right)=\pm\e^{\beta}\bigg|\ \tauy_\e>t_\e\right)-\frac{1}{2}\right|=0.
\end{equation}
\end{theorem}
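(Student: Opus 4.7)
The plan is an inductive scheme that iterates Lemma~\ref{lem:no-exit-long} across the $N$ subintervals $[t_{\e,k},t_{\e,k+1}]$ of length $t_\e'$, using Lemma~\ref{lem:small_channel} to keep the intermediate values $Y_{\e,k}/\e$ inside $[-K(\e),K(\e)]$ up to an exponentially small error. For $k=1,\dots,N$, let
\[
h_k^\e(y) = \Pp\bigl(\tauy_\e > k t_\e' \bigm| Y_\e(0) = \e y\bigr),\qquad
p_{k,\pm}^\e(y) = \Pp\bigl(Y_\e(\tauy_\e) = \pm\e^\beta,\ \tauy_\e > k t_\e' \bigm| Y_\e(0) = \e y\bigr).
\]
I will prove inductively that there exist $c_1\ge c_2\ge\dots\ge c_N>0$ such that
\[
\lim_{\e\downarrow 0}\sup_{|y|\le K(\e)} e^{c_k y^2}\Bigl|\e^{-(k\theta/N+\beta-1)} h_k^\e(y)-2 e^{\lambda Ck/N}\psi_0(y)\Bigr|=0,
\]
together with the same statement for $p_{k,\pm}^\e$ with the factor $2$ removed. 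Taking $k=N$ gives \eqref{eq:last-result-time}; \eqref{eq:last-result-direction} follows by comparing the two $k=N$ asymptotics.

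The base case $k=1$ is exactly \eqref{eq:tau-result-y} (and $h_1^\e = p_{1,+}^\e+p_{1,-}^\e$). For the inductive step from $k$ to $k+1$, the strong Markov property at time $t_\e'$ gives
\[
h_{k+1}^\e(y) = \E\bigl[h_k^\e(\e^{-1} Y_{\e,1});\ \tauy_\e > t_\e' \bigm| Y_\e(0) = \e y\bigr].
\]
I would decompose $h_k^\e(u) = \e^{k\theta/N+\beta-1}\bigl(g_k(u) + r_k^\e(u)\bigr)$, where $g_k(u) = 2 e^{\lambda Ck/N}\psi_0(u)$ is a fixed Lipschitz, Gaussian-decaying function and, by induction, $e^{c_k u^2}|r_k^\e(u)|\to 0$ uniformly on $|u|\le K(\e)$. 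Applying \eqref{eq:h-result-for-Y} to $g_k$ and using $\int g_k = 2 e^{\lambda Ck/N}$ yields the desired leading term $2 e^{\lambda C(k+1)/N}\psi_0(y)\e^{(k+1)\theta/N+\beta-1}(1+o(1))$. The remainder contribution is split on $\{|\e^{-1}Y_{\e,1}|\le K(\e)\}$: inside, $|r_k^\e|$ is dominated by a Gaussian majorant $\delta_\e e^{-c_k u^2}$ with $\delta_\e\downarrow 0$, and \eqref{eq:h-result-for-Y} produces an $o(\e^{\theta/N})$ factor; outside, the trivial bound $h_k^\e\le 1$ combined with Lemma~\ref{lem:small_channel} gives $C_1 e^{-C_2 K^2(\e)}$, which decays faster than any polynomial in $\e$. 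The identical scheme applied to $p_{k,\pm}^\e$, with $g_k$ replaced by $e^{\lambda Ck/N}\psi_0$, produces the companion asymptotic with factor $1$ in place of $2$.

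The main technical obstacle is preserving the Gaussian weight $e^{c_k y^2}$ through the iteration: each application of \eqref{eq:h-result-for-Y} convolves the $u$-dependence with $\psi_0(y)$, so $c_{k+1}$ must be taken strictly smaller than $c_k$ for the relevant Gaussian majorants to remain integrable; since only $N$ iterations occur, a positive constant $c=c_N$ survives. A secondary subtlety is that $h_k^\e$ itself depends on $\e$ and hence cannot be plugged directly into Lemma~\ref{lem:no-exit-long}; the decomposition into a fixed Gaussian main term plus a uniformly vanishing remainder (controlled by the inductive hypothesis on $|u|\le K(\e)$ and by Lemma~\ref{lem:small_channel} beyond) is exactly what bridges this gap.
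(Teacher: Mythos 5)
Your proposal is correct and follows the paper's inductive scheme essentially verbatim for \eqref{eq:last-result-time}: the same induction hypothesis, the same decomposition of $h_k^\e$ into the fixed Gaussian main term $2e^{\lambda Ck/N}\psi_0$ plus a remainder bounded by $\delta_\e e^{-c_k u^2}$, the same use of Lemma~\ref{lem:small_channel} to restrict to $\{|Y_{\e,1}|\le\e K(\e)\}$, and the same application of~\eqref{eq:h-result-for-Y} first with $h=\psi_0$ for the main term and then with $h(u)=e^{-c_k u^2}$ for the error term, shrinking the weight $c_k$ finitely many times. The one genuine departure is in how~\eqref{eq:last-result-direction} is obtained: you propagate the signed quantities $p^\e_{k,\pm}$ through all $N$ iterations in parallel with $h^\e_k$ and then divide the $k=N$ asymptotics, whereas the paper first proves~\eqref{eq:last-result-time} alone, then derives~\eqref{eq:last-result-direction} by conditioning on the position $Y_{\e,N-1}$ at the penultimate time and invoking~\eqref{eq:tau-result-y} once for the final short interval. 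Your variant is slightly more uniform and actually yields the stronger weighted statement $\lim_\e \sup_{|y|\le K(\e)} e^{cy^2}|\e^{-(\alpha-1)}\Pp(\tauy_\e>t_\e;\,Y_\e(\tauy_\e)=\pm\e^\beta)-e^{\lambda C}\psi_0(y)|=0$ for free, at the cost of tracking two sequences of functions; the paper's route does $N-1$ rather than $N$ iterations for the denominator and one extra ad hoc step. Both hinge on the identical lemmas, so this is a bookkeeping difference rather than a new idea, but it is a sound one.
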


\begin{proof} We will use $o_K(1)$ to denote any function that decays faster than any power of $\e$ as $\e\downarrow 0$.  It suffices to prove the theorem in the case where the function $K(\eps)$ grows fast enough as $\eps\to0$  to guarantee that the right-hand side of \eqref{eq:small-gate} is $o_K(1)$. To see that  the theorem will then follow in full generality, we just notice that 
 enlarging the set of initial conditions  $y$ from   $\{|y|\le K(\eps)\}$ to $\{|y|\le K(\eps)\vee |\log \eps|\}$ reduces the situation to that special case.

We will prove by induction that for every $k=1,\dots, N,$ there is $c>0$ such that
\begin{equation}\label{eq:ind_hyp}
\lim_{\e\downarrow 0}\sup_{|y|\leq K(\e)}e^{cy^2}\left|\e^{-\left(k\frac{\theta}{N}+\beta-1\right)}\Pp_y\left(\tauy_\e>t_{\eps,k}\right)-2e^{\frac{\lambda k C}{N}}\psi_0(y)\right|=0,
\end{equation}
where we explicitly indicate the dependence on the starting point $Y_\e(0)=\e y$  as a subscript 
in~$\Pp_y$ for clarity. The case $k=N$ is the desired result~\eqref{eq:last-result-time}. The base of induction, the case $k=1$, is the first claim of Lemma \ref{lem:no-exit-long}.   Let us make the
induction step assuming that~\eqref{eq:ind_hyp} holds for some $k$.

Lemma \ref{lem:small_channel} and the Markov Property allows us to write
\begin{multline}\label{eq:ind-step-small-gate}
\Pp_{y}\left(\tauy_\e>t_{\e,k+1}\right) = \Pp_{y}\left(\tauy_\e>t_{\e,k+1}; |Y_{\e,1}|\leq\e K(\e)\right) + o_K(1)\\
=\int_{-K(\e)}^{K(\e)}\Pp_{y'}\left(\tauy_\e>t_{\e,k}\right)\Pp_{y}\left(Y_{\e,1}\in\e dy';\tauy_\e>t_\e'\right)+o_K(1)
\end{multline}
Using the induction hypothesis \eqref{eq:ind_hyp}, we obtain
\[
\Pp_{y'}\left(\tauy_\e>t_{\e,k}\right)=2\e^{k\frac{\theta}{N}+\beta-1}e^{\frac{\lambda k C}{N}}\psi_0(y')+e^{-cy'^2}o\left(\e^{k\frac{\theta}{N}+\beta-1}\right),
\]
where the error term is uniform over $|y'|\leq K(\e)$. This means that the first term on the right hand side of \eqref{eq:ind-step-small-gate} can be written as
\begin{align}\label{eq:integrated-Markov}
\int_{-K(\e)}^{K(\e)}&\Pp_{y}\left(\tauy_\e>t_{\e,k+1}|Y_{\e,1}=\e y'\right)\Pp_{y}\left(Y_{\e,1}\in\e dy';\tauy_\e>t_\e'\right)\\
\notag&=2 \e^{k\frac{\theta}{N}+\beta-1}e^{\frac{\lambda k C}{N}}\E_{y}\left[\psi_0\left(\e^{-1}Y_{\e,1}\right);|Y_{\e,1}|\leq\e K(\e);\tauy_\e>t_\e'\right] +H(\e,y),
\end{align}
where the error term satisfies
\begin{equation}\label{eq:error}
\sup_{|y|\leq K(\e)}|H(\e,y)|\\
\leq\sup_{|y|\leq K(\e)}\E_{y}\left( e^{-c(\e^{-1}Y_{\e,1})^2} ;\tauy_\e>t_\e'\right)\cdot o \left(\e^{k\frac{\theta}{N}+\beta-1}\right)=o \left(\e^{(k+1)\frac{\theta}{N}+\beta-1}\right)
\end{equation}
where we used \eqref{eq:h-result-for-Y} in the last step with $h(y)=e^{-cy^2}$. 
The main term on the right-hand side of~\eqref{eq:integrated-Markov} can be estimated using Lemma~\ref{lem:small_channel} and \eqref{eq:h-result-for-Y} with $h(y)=\psi_0(y)$ (so  $\int_\R h(y)dy=1$):
\begin{multline}\label{eq:exp-asym}
\E_{y}\left[\psi_0\left(\e^{-1}Y_{\e,1}\right);|Y_{\e,1}|\leq\e K(\e);\tauy_\e>t_\e'\right] =\E_{y}\left[\psi_0\left(\e^{-1}Y_{\e,1}\right);\tauy_\e>t_\e'\right] +o_K(1)\\
=\e^{\frac{\theta}{N}}e^{\frac{\lambda C}{N}}\psi_0(y) + o\left(\e^{\frac{\theta}{N}}\right),
\end{multline}
and this expansion holds uniformly over $|y|\leq K(\e)$.

Putting together \eqref{eq:ind-step-small-gate}, \eqref{eq:integrated-Markov}, \eqref{eq:error}, and \eqref{eq:exp-asym} yields, for sufficiently small $c'>0$,
\begin{equation*}
\lim_{\e\downarrow 0}\sup_{|y|\leq K(\e)}e^{c'y^2}\left|\e^{-\left((k+1)\frac{\theta}{N}+\beta-1\right)}\Pp_y\left(\tauy_\e>t_{\eps,k+1}\right)-2e^{\frac{\lambda (k+1)C}{N}}\psi_0(y)\right|=0.
\end{equation*}
 This completes the induction step and finishes the proof of~\eqref{eq:last-result-time}. 

 To prove \eqref{eq:last-result-direction}, note first that \eqref{eq:small-gate} and the strong Markov property implies
\begin{multline*}
\Pp\left(Y_\e\left(\tauy_\e\right)=
\pm\e^{\beta}\bigg|\tauy_\e>t_\e\right)=\\
\int_{-K(\e)}^{K(\e)} \Pp_y\left(Y_\e\left(\tauy_\e\right)=\pm\e^{\beta}\bigg|\tauy_\e>t_{\e,1}\right)\Pp\left(Y_{\e,N-1}\in \e dy|\tauy_\e>t_{\e,N-1}\right)+o(1).
\end{multline*}
Using \eqref{eq:tau-result-y}, the integrand can be written as
\[
\Pp_y\left(Y_\e\left(\tauy_\e\right)=\pm\e^{\beta}\bigg|\tauy_\e>t_{\e,1}\right)=\frac{\e^{\frac{\theta}{N}+\beta-1}e^{\lambda C}\psi_0(y)+o\left(\e^{\frac{\theta}{N}+\beta-1}\right)}{2\e^{\frac{\theta}{N}+\beta-1}e^{\lambda C}\psi_0(y)+o\left(\e^{\frac{\theta}{N}+\beta-1}\right)}=\frac{1}{2}+o(1),
\]
where the error terms are uniform in $y$ and thus another application of \eqref{eq:small-gate} finishes the proof of \eqref{eq:last-result-direction}.
\end{proof}

\section{Rare transitions in heteroclinic networks}
\label{sec:heteroclinic}

In this section, we discuss, briefly and nonrigorously, the questions that lead us to study the tails of exit times in detail. These questions originate in the long-term behavior of diffusions near heteroclinic networks in the vanishing noise limit. 
A heteroclinic network is a feature of the phase portrait associated with a vector field composed  
of multiple hyperbolic critical points (``saddles'') connected to each other by heteroclinic orbits, see an example of a phase portrait with a heteroclinic network for a cellular flow on Figure~\ref{fig:cellular}.

\begin{figure}
\begin{center}
\includegraphics[width=8cm]{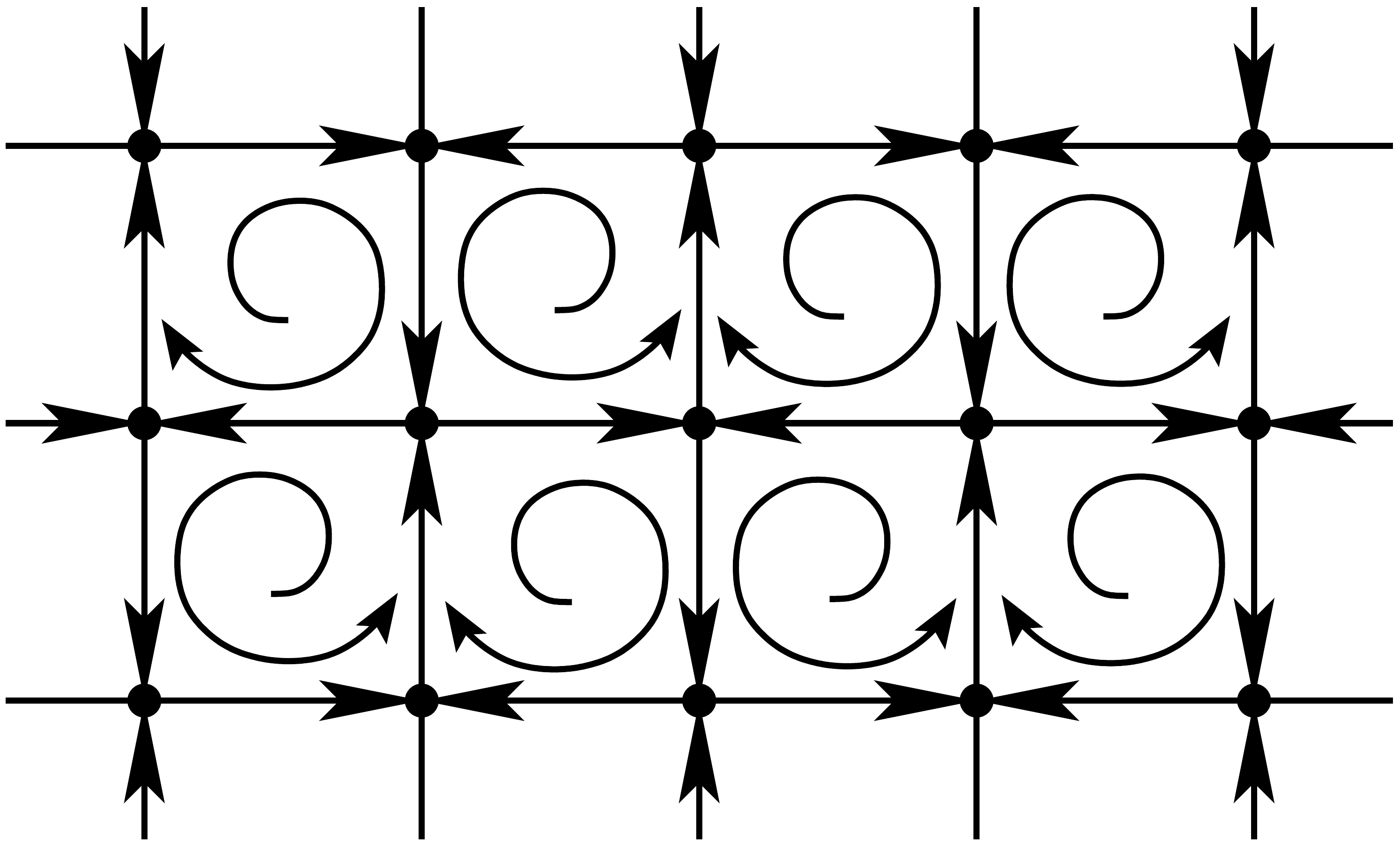}
\caption{\small A heteroclinic network is the backbone of this phase portrait associated with a cellular flow.}
\end{center}
\label{fig:cellular}
\end{figure}

Let us consider a diffusion process solving the It\^o equation
\eqref{eq:SDE} in $\R^2$ (although with minor modifications the discussion below applies to higher dimensions as well) with drift $b$ giving rise to a heteroclinic network.
The typical behavior of such processes for times that are of the order of $\log \eps^{-1}$ was studied in~\cite{Bak2011}, \cite{Bak2010}, \cite{AB2011}. 
Its main features depend mainly on the linearization of the drift $b$ near the saddle points and can be described as follows. Upon reaching a small neighborhood of a saddle, the  process spends a long (logarithmic in $\eps$) time in that neighborhood where the drift is weak and eventually exits along the unstable manifold associated with the positive eigenvalue $\lambda$ of the linearization. This manifold is composed of two outgoing heteroclinic orbits, so the dynamics chooses one of them and follows it for a time of the order of $1$ until it reaches the next saddle where it will also eventually decide between two outgoing directions, etc. 
This description may seem to imply the picture where the limiting (as $\eps\to 0$) process is essentially a random walk on the directed graph of heteroclinic connections.
However, the character of the limiting  process is often not Markovian and depends on the linearizations of $b$ near the saddles.

To see what is going on, let us consider a 2-dimensional diffusion $(X_\eps,Y_\eps)$ near a model saddle 
described by equations
\begin{align*}
dX_{\eps}(t)&=\lambda X_{\eps}(t)dt+\eps dW(t),\\
dY_{\eps}(t)&=-\mu Y_{\eps}dt+ \eps dB(t),
\end{align*}
driven by independent standard Wiener processes $W$ and $B$. Here $\lambda>0$ and $\mu>0$ can be viewed as coefficients of expansion and contraction, respectively. Assuming that $(X_\eps(0),Y_\eps(0))=(0,1)$, i.e., starting the process on the stable manifold of the saddle located at the origin, we are interested in the distribution of $(X_\eps(\tau_\eps),Y_\eps(\tau_\eps))$, where $\tau_\eps=\inf\{t\ge 0: |X_\eps(t)|=1\}$ is the exit time from the strip $[-1,1]\times\R$. Since 
\begin{align*}
X_{\eps}(t)&=\eps e^{\lambda t} N(t),\\
Y_\eps(t)&=e^{-\mu t}+\eps M(t),
\end{align*}
where
\[N(t)=\int_0^t e^{-\lambda s} dW(s)\stackrel{d}{\longrightarrow} N(\infty),\qquad M(t)=\int_0^t e^{-\mu(t-s)} dB(s)\stackrel{d}{\longrightarrow} M(\infty),\quad \text{\rm\ as\ } t\to\infty,\]
we find that for small $\eps$,
\begin{equation}
\label{eq:exit-time-asymptot}
\tau_\eps \stackrel{d}{\approx}\frac{1}{\lambda}\log \frac{1}{\eps}+\frac{1}{\lambda}\log\frac{1}{|N(\infty)|},
\end{equation}
and
\begin{equation}
\label{eq:exit-location}
Y_\eps(\tau_\eps) \stackrel{d}{\approx} \eps^{\rho} |N(\infty)|^\rho+\eps M(\infty),
\end{equation}
where $\rho=\mu/\lambda$. This allows us to conclude that as $\eps\to 0$, the distribution of the exit point~$Y_\eps(\tau_\eps)$ depends
cruciially on how $\rho$ compares to $1$. In particular, if $\rho<1$ (i.e., the contraction is not as strong as expansion: $\mu<\lambda$
) then the first term $\eps^{\rho} |N(\infty)|^\rho$
dominates. It is positive and scales as $\eps^\rho\gg \eps$, i.e., it is stronger than the noise magnitude $\eps$. Therefore, in this situation, at the next saddle point the system is most likely to stay on the same side of the heteroclinic network.  If $\rho> 1$, then the probability of choosing either of the two outgoing connections at the next saddle approaches $1/2$ as $\eps\to0$.

This analysis can be extended to more general initial conditions, to nonlinear drift and diffusion coefficients, to higher dimensions, and to sequences of saddles. The result is that for each sequence of saddles one can iteratively determine the asymptotic probability of realization of each next step along that sequence and the scaling asymptotics of the associated exit distributions. This was done in~\cite{Bak2011}, \cite{Bak2010}, and~\cite{AB2011}.
The result is that at the logarithmic time scales (the saddle exit times are typically logarithmic in~$\eps$, see \eqref{eq:exit-time-asymptot}), certain pathways in the network are typical but many pathways are not realized due to the largely one-sided exit distributions scaling
as $\eps^\alpha$ with $\alpha<1$ which in turn are due to insufficient contraction at saddles.

This is interesting per se and among other applications gives an explanation of the poor vocabulary of excitation patterns in neural networks and similar dynamics modeled by Lotka--Volterra type systems with small noise.  However, this information is not sufficient to address questions about the time scales that are longer than logarithmic such as the limiting behavior of the invariant distribution.
To answer these questions, one must quantify the probabilities of rare events corresponding to atypical exits from saddle points. This means that one needs to study probabilities of events like 
$\Pp(Y_\eps(\tau_\eps)\sim \eps)$ for $Y_\eps(\tau_\eps)$ given in~\eqref{eq:exit-location}. The second term in \eqref{eq:exit-location} is of the order of~$\eps$, so ignoring many technical details we reduce this question to estimating 

\begin{equation}
\label{eq:poly-asymp-wrong-exit}
\Pp(\eps^\rho|N(\infty)|^\rho\sim\eps)\sim\Pp(N(\infty)\sim\eps^{\gamma}) \sim c\eps^{\gamma}
\end{equation}
with $\gamma=\frac{1}{\rho}-1$. The last relation holds 
since $N(\infty)$ has continuous Lebesgue density at $0$.

This means that the probability of an atypical exit from the saddle is asymptotically polynomial in $\eps$, of the order of $\eps^\gamma$ which in turn means that one typically has to wait for time of the order of $\eps^{-\gamma}\log \eps^{-1}$ before one sees such a rare event happen.  Ordering all exponents emerging in such calculations for all rare transitions:
$\gamma_1<\gamma_2<\ldots<\gamma_N$ and introducing $T_{k,\eps}=\eps^{-\gamma_k}\log \eps^{-1}$, we see that for  $t_\eps$ satisfying
\[T_{k,\eps}\ll t_\eps \ll T_{k+1,\eps}, \]
transitions can be classified into admissible (that typically occur many times up to $t_\eps$) and rare (that typically do not occur at all up to $t_\eps$).  As  $t_\eps$ crosses a level $T_{k,\eps}$  from below, some new transitions become available. Increasing $t_\eps$ gradually from $0$ to values beyond $T_{N,\eps}$ creates a hierarchical structure of merging clusters and associated time scales such that at each time scale, the system explores one cluster making no transitions between different clusters. 

This picture containing the description of the limit of invariant distribution, homogenization results, etc., is similar to the Freidlin--Wentzell picture of metastability and the associated hierarchy of cycles. The important difference is that in our picture the probabilities of rare events decay polynomially and the associated time scales grow polynomially while the large deviation estimates in the Freidlin--Wentzell theory lead to exponentialy decaying probabilitites and exponentially growing transition times between the metastable states. 

We do not have a rigorous derivation of a general precise version of the asymptotic relation~\eqref{eq:poly-asymp-wrong-exit}. The difficulties that emerge are related to handling nonlinearities  in the drift and diffusion terms and to the fact that $N(\infty)$ and $M(\infty)$ are only approximations to the (mutually dependent) random variables $N_\eps(\tau_\eps)$ and $M_\eps(\tau_\eps)$, where the $\eps$ subscript of $N_\eps$ and $M_\eps$ refers to the fact that for the case of non-additive noise, these stochastic processes do depend on $\eps$.  So, to realize this program, among other things we must either prove that the density of $N_\eps(\tau_\eps)$ uniformly converges to the Gaussian density in a small neighborhood of zero, or to find other means to compare distribution of   $N_\eps(\tau_\eps)$ to the Gaussian at small scales, which requires going beyond the known weak convergence of distributions.

According to~\eqref{eq:exit-time-asymptot}, if $N(\infty)$ takes an atypically small value of the order of $\eps^{\gamma}$, then
\begin{equation}
\label{eq:long-tau}
\tau_\eps\approx\frac{1+\gamma}{\lambda}\log\frac{1}{\eps},
\end{equation}
i.e., exit takes abnormally long time (the typical exit time corresponds to $\gamma=0$). In other words, the rare transitions determining the long-term behavior of diffusions near heteroclinic networks occur due to atypically long stays in the neighborhood of saddle points withstanding the repulsion in the unstable direction.

In the present paper (as well as in~\cite{BPG2017}), we study the polynomial decay of the distribution of exit times at scales described by~\eqref{eq:long-tau}. We believe that the method we propose here is applicable in the multi-dimensional situation and we plan to give a rigorous treatment of it  in upcoming publications.

\bibliographystyle{alpha}
\bibliography{citations}

\end{document}